\newtheorem{theorem}{Theorem}[section]
\newtheorem{corollary}[theorem]{Corollary}
\newtheorem{lemma}[theorem]{Lemma}
\newtheorem{proposition}[theorem]{Proposition}
\newtheorem{problem}[theorem]{Problem}
\theoremstyle{definition}
\newtheorem{example}[theorem]{Example}
\begin{document}
\sloppy
%%%%% To ease editing, for IMPAN journals add:

\baselineskip=17pt

%%%%%%%%%%%%%%%%

\title[On stable pairs of Hahn and extremal sections \dots]{On stable pairs of Hahn and extremal sections of separately continuous functions on the products with a scattered multiplier}

\author[O. Maslyuchenko]{Oleksandr Maslyuchenko}
\address{Institute of Mathematics\\ University of Silesia in Katowice\\
Bankowa 12\\
40-007 Katowice, Poland \ \ \ \textit{and}\ \ \ 
Faculty of Mathematics and Informatics\\ Yuriy Fedkovych Chernivtsi National University\\ Kotsiubynskoho 2\\ 58012 Chernivtsi\\ Ukraine}
\email{ovmasl@gmail.com}

\author[A. Lianha]{Anastasiia Lianha}
\address{Faculty of Mathematics and Informatics\\ Yuriy Fedkovych Chernivtsi National University\\ Kotsiubynskoho 2\\ 58012 Chernivtsi\\ Ukraine}
\email{lianha.anastasiia@chnu.edu.ua}

\date{2.01.2025}

\begin{abstract}
The minimal and the maximal sections $\wedge_f,\vee\!_f:X\to\overline{\mathbb R}$ of a function $f:X\times Y\to\overline{\mathbb R}$ are defined by
$\wedge_f(x)=\inf\limits_{y\in Y}f(x,y)$  and $\vee\!\!_f(x)=\sup\limits_{y\in Y}f(x,y)$  for any $x\in X$.
 A pair $(g,h)$ of functions on $X$ is called a stable pair of Hahn if there exists a sequence of continuous functions $u_n$ on $X$ such that $h(x)=\min\limits_{n\in\mathbb{N}}u_n(x)$ and $g(x)=\max\limits_{n\in\mathbb{N}}u_n(x)$ for any $x\in X$. Evidently, every stable pair of Hahn is a countable pair of Hahn, and hence a pair of Hahn.
We prove that for any separately continuous function $f$ on the product of compact spaces $X$ and $Y$ such that $Y$ is scattered and at least one of them has the countable chain property, the pair $(\wedge_f,\vee\!_f)$ is a stable pair of Hahn.
We prove that for any stable pair of Hahn $(g,h)$ on the product of a topological space $X$ and an infinity completely regular space $Y$ there exists a separately continuous function $f$ on $X\times Y$ such that $\wedge_f=g$ and $\vee\!_f=h$.

\end{abstract}

\subjclass[2020]
{Primary 
	54C30, % Real-valued functions in general topology
	26A15, % Continuity and related questions 
	26B99; % Function of several variable. None of the above, but in this section 
Secondary
	54G12, % Scattered spaces
	54H05, % Descriptive set theory 
	26A21.% Classification of real functions; Baire classification of sets and functions
}

\keywords{pair of Hahn, stable pair of Hahn, countable pair of Hahn, semicontinuous function, separately continuous function, scattered space, extremal section, minimal section, maximal section}

\maketitle

\section{Introduction}

At the beginning of  20th century H.~Hahn proved in \cite{H} that for any pair of functions on a metric space such that the least of them is upper semicontinuous and the greatest of them is lower semicontinuous, there is an intermediate continuous function. Later, in the middle of the 20th century,  Hahn's result was generalized to a wider classes of spaces (Dieudonne for paracompact spaces, Tong and Katetov for normal spaces). Such a pair of functions V.~K.~Maslyuchenko in \cite{MMV} proposed to call a pair of Hahn. More precisely, a pair  $(g,h)$ of functions $g,h:X\to\overline{\mathbb R}$ on a topological space $X$ is called \emph{a pair of Hahn} if $g\le h$, $g$ is  an upper semicontinuous function and $h$ is a lower semicontinuous function.
In \cite{MMV}  the authors consider   \emph{the minimal and the maximal sections} $\wedge_f,\vee\!_f:X\to\overline{\mathbb R}$ of a function  $f:X\times Y\to\overline{\mathbb R}$ which are defined by 
$$\wedge_f(x)=\inf\limits_{y\in Y}f(x,y)\mbox{ and }\vee\!\!_f(x)=\sup\limits_{y\in Y}f(x,y)\mbox{ for any }x\in X.$$
They observed that  $(\wedge_f,\vee\!_f)$ is a pair of Hahn for an arbitrary separately continuous function and asked the following question.
\begin{problem}\label{prb1}
Let $X$ and $Y$ be topological spaces and $(g,h)$ be a pair of Hahn on $X$. Under what assumptions does  there exist a separately continuous function ${f\colon X\times Y\to \overline{\mathbb R}}$ such that $g=\wedge_f$ and $h=\vee\!_f$?
\end{problem}
In \cite{MMV} this problem was solved only for $X=[a;b]$ and $Y=[c;d]$. The authors proved that every pair of Hahn on a segment $[a;b]$ is generated by a separately continuous function $f:[a;b]\times [c;d]\to \overline{\mathbb R}$. They also obtained a partial answer to Problem~\ref{prb1} in more general situation: if $X=Y$ is a perfectly normal space with $G_\delta$ diagonal and normal square and $(g,h)$ is a pair of Hahn on $X$ such that $g$ is continuous, then there is a separately continuous function $f:X^2\to \overline{\mathbb R}$ such that $g\le f\le h$ and $\vee\!_f=h$. They used a method of the construction of a separately continuous function with a given diagonal which is similar to \cite{MMS}.

In our previous paper \cite{MK} we solved Problem~\ref{prb1} for a perfectly normal space $X$ and a topological space $Y$ which has a non-scattered compactification. Moreover, we introduced the notion of \textit{a countable pair of Hahn} as a pair $(g,h)$ such that $g(x)=\inf\limits_{n\in\mathbb N}g_n(x)\le\sup\limits_{n\in\mathbb N}h_n(x)=h(x)$, $x\in X$, where $g_n$ and $h_n$ are continuous functions, and obtained the following result. 
\begin{theorem}[{\cite[Theorems 7 and 10]{MK}}] Let $X$ be a topological space, $Y$ be a topological space which has a non-scattered compactification and $(g,h)$ be a countable pair of Hahn on $X$. Then there exists a separately continuous function ${f:X\times Y\to\overline{\mathbb R}}$ such that $g=\wedge_f$ and $h=\vee\!_f$.
\end{theorem}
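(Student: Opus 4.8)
The plan is to reduce the statement, in two steps, to a direct construction on a product with the Cantor set. \emph{First I would shrink the multiplier.} Let $\widetilde Y$ be a non-scattered compactification of $Y$. Passing to its (nonempty) perfect kernel, which admits a continuous surjection onto $[0,1]$, and extending that surjection by the Tietze theorem, we get a continuous surjection $\varphi\colon\widetilde Y\to[0,1]$. If $F\colon X\times[0,1]\to\overline{\mathbb R}$ is separately continuous with $\wedge_F=g$ and $\vee_F=h$, then $f(x,y):=F(x,\varphi(y))$ is separately continuous on $X\times Y$; and since $\varphi(Y)$ is dense in $[0,1]$ while $F(x,\cdot)$ is continuous, $\inf_{y\in Y}f(x,y)=\inf_{t\in[0,1]}F(x,t)$ (and the same for suprema), so $\wedge_f=g$ and $\vee_f=h$. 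Thus it suffices to take $Y=[0,1]$; and a separately continuous function on $X\times C$ (with $C\subseteq[0,1]$ the Cantor set) realising $(g,h)$ extends to $X\times[0,1]$ by affine interpolation in $y$ over the complementary intervals of $C$, without altering the infimum or the supremum of the sections. Hence it is enough to realise $(g,h)$ on $X\times C$.

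\emph{Next I would shrink the base.} Composing with an increasing homeomorphism $\overline{\mathbb R}\to[-1,1]$, assume all functions bounded; replacing $g_n$ by $\min_{k\le n}g_k$ and $h_n$ by $\max_{k\le n}h_k$, assume $g_n\downarrow g$ and $h_n\uparrow h$, $g_n,h_n$ continuous. The map $\Phi:=(g_1,h_1,g_2,h_2,\dots)\colon X\to[-1,1]^{\mathbb N}$ is continuous; put $Z:=\Phi(X)$, which is metrizable, hence perfectly normal. Letting $\bar g$ be the infimum of the odd, and $\bar h$ the supremum of the even, coordinate projections, one has $\bar g\circ\Phi=g\le h=\bar h\circ\Phi$, so $(\bar g|_Z,\bar h|_Z)$ is a (countable) pair of Hahn on $Z$; and a separately continuous $\bar f$ on $Z\times C$ realising it pulls back, via $\bar f\circ(\Phi\times\mathrm{id}_C)$, to one on $X\times C$ realising $(g,h)$. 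Therefore it suffices to prove the theorem for perfectly normal $X$. For such $X$ the Katetov--Tong insertion theorem provides a continuous $r$ with $g\le r\le h$; replacing $g_n$ by $\min(g_n,r)$ and $h_n$ by $\max(h_n,r)$, we may assume $g\le g_n\le r\le h_n\le h$.

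\emph{The heart is the construction on $X\times C$.} Fix a clopen partition $C=C_0\sqcup C_1$. It is enough to build separately continuous $F_g,F_h\colon X\times C\to\overline{\mathbb R}$ with $g=\wedge_{F_g}\le\vee_{F_g}\le r$ and $r\le\wedge_{F_h}\le\vee_{F_h}=h$, and then to let $F$ equal $F_g$ on $X\times C_0$ and $F_h$ on $X\times C_1$: since $C_0,C_1$ are clopen no compatibility is needed, $F$ is separately continuous, and $\wedge_F=\min(g,\wedge_{F_h})=g$, $\vee_F=\max(\vee_{F_g},h)=h$. By symmetry I treat $F_g$. Writing $F_g=r-T$, the problem becomes to produce a separately continuous $T\colon X\times C\to[0,\infty)$ with $\sup_z T(x,z)=r(x)-g(x)=\sup_n\bigl(r(x)-g_n(x)\bigr)$, where the continuous functions $r-g_n\ge0$ increase pointwise to $r-g$. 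I would do this inductively along the tree $C=\{0,1\}^{\mathbb N}$: feed the approximants $r-g_n$ into the construction level by level, distributing them along a scheme of pairwise disjoint clopen pieces arranged so that every point of $C$ has a neighbourhood meeting only finitely many pieces, while still letting $\sup_z T(x,z)$ sweep out the whole increasing limit $r(x)-g(x)$; this is the diagonal technique of \cite{MMS}, \cite{MMV}.

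\emph{The hard part} is separate continuity of $T$ (hence of $f$) in the $x$-variable at the accumulation points of the scheme inside $C$: because $g$ is only upper semicontinuous, the point of $C$ at which $\inf_z T(x,z)$ is attained must jump with $x$, and the scheme has to be organised so that this jumping never spoils continuity of $T(x,\cdot)$. It is exactly to leave room for this that the multiplier must be non-scattered — over a scattered multiplier the scheme degenerates and no such construction exists. Granting it, the required equalities $\wedge_f=g$ and $\vee_f=h$ follow by inspection from the definitions of $T$, $F_g$, $F_h$, $F$ and $f$.
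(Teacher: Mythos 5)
This statement is not proved in the present paper at all: it is imported verbatim from \cite{MK} (Theorems 7 and 10 there), so there is no in-paper argument to compare yours with, and your proposal has to stand on its own. Its reductions are essentially sound and in the spirit of what the authors do elsewhere: shrinking the multiplier to $[0,1]$ via the Pe{\l}czy\'nski--Semadeni fact that a non-scattered compact Hausdorff space maps continuously onto $[0,1]$, and shrinking the base to a metrizable $Z$ via the map $\Phi=(g_1,h_1,\dots)$ is exactly the Hilbert-cube trick used in the proof of Theorem~\ref{t3}. These steps check out (density of $\varphi(Y)$ in $[0,1]$ gives the equality of infima, and the affine interpolation over the gaps of $C$ preserves separate continuity and the extremal sections).

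The genuine gap is that the heart of the proof --- the construction of the separately continuous $T$ on $X\times C$ with $\sup_z T(x,z)=\sup_n\bigl(r(x)-g_n(x)\bigr)$ --- is not carried out; you explicitly write ``granting it,'' and that is precisely where non-scatteredness and countability of the pair are actually used. Worse, the scheme you sketch is internally inconsistent: a family of pairwise disjoint nonempty clopen pieces of the compact set $C$ such that ``every point of $C$ has a neighbourhood meeting only finitely many pieces'' is locally finite in a compact space and hence \emph{finite}, so it cannot encode the whole sequence $(r-g_n)_{n\in\mathbb N}$. The pieces must accumulate, and then the continuity of $T(x,\cdot)$ at the accumulation points is the real difficulty (the heights $r(x)-g_n(x)$ do not tend to $0$, so Lemma~\ref{l3}-type arguments do not apply); this is handled in \cite{MK} and in Lemma~\ref{l5} here by a Schwartz-function device, of which your proposal contains no trace. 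Finally, your closing remark that ``over a scattered multiplier \dots no such construction exists'' is contradicted by Theorem~\ref{t3} of this very paper: over any infinite completely regular $Y$, scattered or not, every \emph{stable} pair of Hahn is realized; what fails over a scattered compact multiplier is only the realizability of non-stable countable pairs.
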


 Obviously, every countable pair of Hahn is a pair of Hahn.
Furthermore, by the Tong theorem \cite{T} the notion of a countable pair of Hahn
coincides with the notion of a pair of Hahn for perfectly normal space. Moreover, using Tong's result we may suppose that in the definition of a countable pair of Hahn the functions $g_n$ and $h_n$ are equal. Therefore, for a compact space $Y$ we answered  to Problem~\ref{prb1} only in the  non-scattered case.

In the present paper we solve Problem~\ref{prb1} in the case where $Y$ is a scattered compact. The central role in our investigation plays the notion of a stable pair of Hahn. A pair $(g,h)$ of functions on $X$ is called \emph{a stable pair of Hahn} if there exists a sequence of continuous functions $u_n$ on $X$ such that $h(x)=\min\limits_{n\in\mathbb{N}}u_n(x)$ and $g(x)=\max\limits_{n\in\mathbb{N}}u_n(x)$ for any $x\in X$. Evidently, every stable pair of Hahn is a countable pair of Hahn, and hence it is a pair of Hahn. We start our research with some simple observation (Theorem~\ref{t1}) that for any countable compact $Y$ and an arbitrary $X$ and for any separately continuous function $f$ on $X\times Y$ the extremal sections $(\wedge_f,\vee\!_f)$ form a stable pair of Hahn.  In Section~\ref{secStablePairs} we also characterize stable pairs in terms of functions of the first stable Baire class. Next in Section~\ref{sec_alphaT} we give an example of a separately continuous function $f$ on the square $\alpha T^2$ of Alexandroff's compactification $\alpha T$ of an uncountable discrete space $T$ such that the extremal sections $\wedge_f$, $\vee\!_f$ are not Baire one and prove a version of Theorem~\ref{t1} for a separable space $X$ and $Y=\alpha T$. In the next section we obtain two general versions of Theorem~\ref{t1}. We prove that for any separately continuous function $f$ on the product of a separable space $X$ and a scattered compact $Y$ the pair $(\wedge_f,\vee\!_f)$ is a stable pair of Hahn (Theorem~\ref{t4}). In the case where both $X$ and $Y$ are compact we replace the separability with the countable chain condition of $X$ or $Y$ (Theorem~\ref{t5}). In Sections~\ref{secCDSRS}~--~\ref{secConstrWithGiwenPH} we construct a separately continuous function with a given extremal sections. We prove that for any stable pair of Hahn $(g,h)$ on the product of a topological space $X$ and an infinity completely regular space $Y$ there exists a separately continuous function $f$ on $X\times Y$ such that $\wedge_f=g$ and $\vee\!_f=h$ (Theorem~\ref{t3}).

\section{Stable pairs of Hahn}\label{secStablePairs}

Let $X$ be a topological space. A pair  $(g,h)$ of functions $g,h:X\to\overline{\mathbb{R}}$ is called  \emph{a stable pair of Hahn}, if there exists a sequence of continuous functions $u_n:X\to\overline{\mathbb{R}}$ such that $g(x)=\min\limits_{n\in\mathbb N}u_n(x)$ and $h(x)=\max\limits_{n\in\mathbb N}u_n(x)$ for any $x\in X$. Obviously, every stable pair of Hahn is a countable pair of Hahn. For a function $f\colon X\times Y\to Z$ we always use standard notations $f^x(y)=f_y(x)=f(x,y)$, where $x\in X$, $y\in Y$.

The following theorem shows the importance of stable pairs of Hahn for the studying of the extremal sections of separately continuous functions on the product of a topological space and a scattered compact.

\begin{theorem}\label{t1} Let $X$ be a topological space, $Y$ be a compact countable topological space and $f:X\times Y\rightarrow\overline{\mathbb{R}}$ be a separately continuous function. Then $(\wedge_{f},\vee_{f})$ is a stable pair of Hahn.
\end{theorem}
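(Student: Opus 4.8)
The plan is to produce the required sequence $(u_n)$ directly from the sections of $f$ over the points of $Y$, with no construction needed beyond enumerating $Y$. Since $Y$ is countable, fix an enumeration $Y=\{y_n:n\in\mathbb N\}$ (if $Y$ happens to be finite, repeat the last term so that the index set stays $\mathbb N$; the case $Y=\varnothing$ is vacuous). For each $n$ put $u_n=f_{y_n}$, i.e. $u_n(x)=f(x,y_n)$ for $x\in X$. Because $f$ is separately continuous, each section $f_{y_n}\colon X\to\overline{\mathbb R}$ is continuous, so $(u_n)_{n\in\mathbb N}$ is an admissible sequence of continuous functions for the definition of a stable pair of Hahn.

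It then remains to check that $\wedge_f(x)=\min_{n\in\mathbb N}u_n(x)$ and $\vee_f(x)=\max_{n\in\mathbb N}u_n(x)$ for every $x\in X$; the only point that genuinely needs an argument is that these infimum and supremum over $n$ are attained. Fix $x\in X$. Since the enumeration exhausts $Y$, we have $\{u_n(x):n\in\mathbb N\}=\{f(x,y):y\in Y\}=f^x(Y)$. The section $f^x\colon Y\to\overline{\mathbb R}$ is continuous and $Y$ is compact, so $f^x$ attains its minimum and maximum on $Y$ (equivalently, $f^x(Y)$ is a compact, hence closed, subset of $\overline{\mathbb R}$ and therefore contains its infimum and supremum). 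Pick $y_m,y_k\in Y$ with $f(x,y_m)=\inf_{y\in Y}f(x,y)$ and $f(x,y_k)=\sup_{y\in Y}f(x,y)$. Then $u_m(x)\le u_n(x)\le u_k(x)$ for all $n$, whence
$$\min_{n\in\mathbb N}u_n(x)=u_m(x)=\inf_{y\in Y}f(x,y)=\wedge_f(x),\qquad \max_{n\in\mathbb N}u_n(x)=u_k(x)=\sup_{y\in Y}f(x,y)=\vee_f(x).$$
As $x\in X$ was arbitrary, $(\wedge_f,\vee_f)$ is a stable pair of Hahn.

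I do not expect any real obstacle here: the statement is essentially the extreme value theorem (a continuous $\overline{\mathbb R}$-valued function on a compact space attains its bounds) combined with the fact that a countable compact space is the literal union of its countably many points, so that letting the section index run over $\mathbb N$ already realizes the extremal section as an attained minimum, respectively maximum. In particular, this argument uses neither the classification of countable compacta nor metrizability or scatteredness of $Y$ — only countability and compactness of $Y$ and separate continuity of $f$. Scatteredness of $Y$ and the chain conditions on $X$ will be needed only for the substantially harder generalizations in Theorems~\ref{t4} and~\ref{t5}, where $Y$ is no longer countable and one can no longer simply enumerate it.
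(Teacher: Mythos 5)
Your proposal is correct and is essentially identical to the paper's own proof: enumerate $Y=\{y_n:n\in\mathbb N\}$, set $u_n=f_{y_n}$, and use compactness of $Y$ together with continuity of $f^x$ to see that the infimum and supremum over $Y$ are attained, hence equal to $\min_n u_n(x)$ and $\max_n u_n(x)$. The only difference is that you spell out the finite and empty cases and the extreme value argument slightly more explicitly.
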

\begin{proof} Let $Y=\{y_{n}: n\in\mathbb{N}\}$ and $g=\wedge_{f}$, $h=\vee_{f}$. Denote $u_{n}=f_{y_{n}}$, that is $u_{n}(x)=f(x,y_{n})$ for any $x\in X$. Since $Y$ is compact and $f^{x}$ is continuous, we conclude
$$g(x)=\inf\limits_{y\in Y}f^{x}(y)=\min\limits_{y\in Y}f^{x}(y)=\min\limits_{y\in Y}f_{y}(x)=
\min\limits_{n\in\mathbb{N}}f_{y_{n}}(x)=\min\limits_{n\in\mathbb{N}}u_{n}(x)$$
and, similarly,
$$h(x)=\sup\limits_{y\in Y}f^{x}(y)=\max\limits_{y\in Y}f^{x}(y)=\max\limits_{y\in Y}f_{y}(x)=
\max\limits_{n\in\mathbb{N}}f_{y_{n}}(x)=\max\limits_{n\in\mathbb{N}}u_{n}(x)$$
\end{proof}

Next we are going to prove some characterization of stable pairs of Hahn with the help of the stable convergence. Recall that a sequence of functions $f_n\colon X\to Y$  \emph{stably converges} to a function $f:X\to Y$ (write $f_n\mathop{\longrightarrow}\limits^d f$) if for any $x\in X$ there is $n\in\mathbb{N}$ such that $f_k(x)=f_n(x)$ for any $k\ge n$. That is  $f_n\mathop{\longrightarrow}\limits^d f$ means the pointwise convergence with respect to the discrete topology on $Y$. We say that a function $f$ is of the \textit{first stable Baire class} if there are continuous functions $f_n:X\to Y$, $n\in\mathbb{N}$, such that $f_n\mathop{\longrightarrow}\limits^d f$.

We start with some generalization of the Tietze-Urysohn theorem on the extension of a continuous function with given estimations, which is  corollary of the Tong theorem \cite{T}. Recall that a set $A$ is called \emph{functionally closed} in $X$ if there is a continuous function $\alpha:X\to[0;1]$ such that $A=\alpha^{-1}(0)$.

\begin{proposition}\label{t2} Let $X$  be  a topological space, $Y$  be  a functionally closed subspace of $X$, $(g,h)$  be  a countable pair of Hahn on $X$ and $f_{0}:X\rightarrow\overline{\mathbb{R}}$ be a continuous function such that $g(y)\leq f_{0}(y)\leq h(y)$ for any $y\in Y$. Then there exists a continuous function $f:X\rightarrow\overline{\mathbb{R}}$ such that $f(y)=f_{0}(y)$ for $y\in Y$ and $g(x)\leq f(x)\leq h(x)$ for any $x\in X$.
\end{proposition}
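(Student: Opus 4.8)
The plan is to reduce the statement to a single application of the Tong insertion theorem by cleverly modifying the given pair of Hahn near the functionally closed set $Y$. Since $Y$ is functionally closed, fix a continuous $\alpha\colon X\to[0;1]$ with $Y=\alpha^{-1}(0)$. The idea is to replace $(g,h)$ by a new pair $(\tilde g,\tilde h)$ which still sandwiches a continuous function, but which is pinched to the value $f_0$ on $Y$: roughly, set $\tilde g(x)=\max\{g(x),\, f_0(x)-\varphi(\alpha(x))\}$ and $\tilde h(x)=\min\{h(x),\, f_0(x)+\varphi(\alpha(x))\}$ where $\varphi\colon[0;1]\to[0;+\infty]$ is a suitable continuous ``bump'' with $\varphi(0)=0$ and $\varphi(t)>0$ (indeed $=+\infty$, or just large) for $t>0$. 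On $Y$ we have $\alpha=0$, so $\varphi(\alpha(x))=0$ and hence $\tilde g(y)=\tilde h(y)=f_0(y)$ by the hypothesis $g(y)\le f_0(y)\le h(y)$. Off $Y$, the added terms are so large that they do not interfere, so $\tilde g$ and $\tilde h$ agree with $g$ and $h$ there (after choosing $\varphi=+\infty$ on $(0;1]$, or handling $\overline{\mathbb R}$-valued subtleties directly); in particular $\tilde g\le\tilde h$ everywhere, using $g\le h$ off $Y$ and equality on $Y$.

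The key point is that $(\tilde g,\tilde h)$ is again a \emph{countable} pair of Hahn. Here I would use the explicit representation: if $g=\inf_n g_n$ and $h=\sup_n h_n$ with $g_n,h_n$ continuous, then $f_0-\varphi\circ\alpha$ is upper semicontinuous (being a continuous function minus a lower semicontinuous one — note $\varphi\circ\alpha$ is continuous if $\varphi$ is, or l.s.c.\ if we allow the value $+\infty$) and can itself be written as an infimum of a sequence of continuous functions; similarly $f_0+\varphi\circ\alpha$ is a supremum of continuous functions. Then $\tilde g=\max\{\inf_n g_n,\ \inf_m(\text{cont})\}$ is a max of two countable infima of continuous functions, which is again a countable infimum of continuous functions (rewrite $\max\{\inf_n a_n,\inf_m b_m\}=\inf_{n,m}\max\{a_n,b_m\}$), and dually for $\tilde h$. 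So $(\tilde g,\tilde h)$ is a countable pair of Hahn, hence in particular $\tilde g$ is u.s.c.\ and $\tilde h$ is l.s.c.\ with $\tilde g\le\tilde h$.

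Now apply the Tong theorem~\cite{T} (or its $\overline{\mathbb R}$-valued version, which is what one really needs and which is why the hypothesis is phrased via countable pairs of Hahn): there is a continuous $f\colon X\to\overline{\mathbb R}$ with $\tilde g\le f\le\tilde h$. On $Y$ this forces $f(y)=f_0(y)$; everywhere we get $g(x)\le\tilde g(x)\le f(x)\le\tilde h(x)\le h(x)$. That is the desired function.

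The main obstacle I anticipate is the bookkeeping with $\overline{\mathbb R}$-valued functions and the value $+\infty$: one must make sure that $f_0\pm\varphi\circ\alpha$ is still a legitimate countable infimum/supremum of \emph{real-valued} continuous functions (truncating $\varphi$ to $\min\{\varphi,n\}$ on the $n$-th stage handles this), that the arithmetic $f_0-\varphi\circ\alpha$ makes sense where $f_0$ might be $\pm\infty$, and that ``$\tilde g=g$ off $Y$'' is argued carefully rather than by the heuristic ``$+\infty$ wins.'' A cleaner route avoiding $\infty$ altogether is to only require $\varphi$ to be continuous and strictly positive off $0$ and then verify $\tilde g\le\tilde h$ directly: off $Y$ one has $\tilde g\le h$ and $\tilde g\le f_0+\varphi\circ\alpha$... — actually the inequality $\tilde g\le\tilde h$ needs $g\le f_0+\varphi\circ\alpha$ and $f_0-\varphi\circ\alpha\le h$, which hold on $Y$ by hypothesis and can be arranged off $Y$ by taking $\varphi$ large enough relative to $|f_0|,|g|,|h|$ — but those are not bounded, so the honest fix is indeed to allow $\varphi(\alpha(x))=+\infty$ for $x\notin Y$, i.e.\ work with the l.s.c.\ function that is $0$ on $Y$ and $+\infty$ off $Y$ (this is l.s.c.\ because $Y$ is closed), and carry the truncation argument for the countable representation. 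Once that technical layer is in place, the proof is short.
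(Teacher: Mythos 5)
Your proposal is correct and follows essentially the same route as the paper: pinch the pair to $f_0$ on $Y$ using the witness $\alpha$ of functional closedness, verify the pinched pair $\bigl(\max\{g, f_0-n\alpha\},\ \min\{h, f_0+n\alpha\}\bigr)$ is still a countable pair of Hahn via exactly the truncation idea you describe, and finish with one application of Tong's theorem. The $\overline{\mathbb R}$/unboundedness worry you raise at the end is handled in the paper by first composing with an increasing homeomorphism of $\overline{\mathbb R}$ onto $[0;1]$, after which $n\alpha(x)>1$ already makes the extra terms inactive off $Y$.
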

\begin{proof} Since there exists an increasing homeomorphism $\overline{\mathbb{R}}$ on $\mathbb{I}=[0;1]$, we may assume that $f_0,g,h:X\to\mathbb{I}$ (see \cite[Proposition 1]{MK}). Let $u,v:X\rightarrow\mathbb{I}$,
$$u(x)=\left\{
            \begin{array}{cl}
              f_{0}(x),& x\in Y \\
              g(x),& x\in X\setminus Y
            \end{array}
          \right.
,\ \ \ \
v(x)=\left\{
            \begin{array}{cl}
              f_{0}(x),& x\in Y \\
              h(x),& x\in X\setminus Y
            \end{array}
          \right.
$$
Then $g(x)\leq u(x)\leq v(x)\leq h(x)$ for any $x\in X$. Let us prove that $(u,v)$ is  a countable pair of Hahn. Since $(g,h)$ is a countable pair of Hahn, there exist sequences continuous functions $g_n,h_n:X\to[0;1]$ such that
$$
\lim\limits_{n\to\infty}g_n(x)=g(x)
\ \ \ \text{and}\ \ \
\lim\limits_{n\to\infty}h_n(x)=h(x),
$$
and  $g_n(x)\ge g_{n+1}(x)$ and $h_n(x)\le h_{n+1}(x)$ for each $x\in X$.
Since $Y$ is functionally closed, there exists a continuous function $\varphi:X\to[0;1]$ such that $Y=\varphi^{-1}(0)$.
Put for any $x\in X$
$$
u_n(x)=\max\Big\{g_n(x),f_0(x)-n\varphi(x)\Big\},
\ \ \
v_n(x)=\min\Big\{h_n(x),f_0(x)+n\varphi(x)\Big\}.
$$
It is clearly that $u_n$ and $v_n$ are continuous and $u_n(x)\ge u_{n+1}(x)$ and $v_n(x)\le v_{n+1}(x)$ for any $x\in X$. Let us show that
$$
\lim\limits_{n\to\infty}u_n(x)=u(x)
\ \ \ \text{and}\ \ \
\lim\limits_{n\to\infty}v_n(x)=v(x),\ \ \ x\in X.
$$
Firstly, consider the case $x\in Y$. Then $\varphi(x)=0$. Hence,
$$
u_n(x)=\max\Big\{g_n(x),f_0(x)\Big\}
\ \ \ \text{and}\ \ \
v_n(x)=\min\Big\{h_n(x),f_0(x)\Big\}.
$$
Therefore,
$$
\lim\limits_{n\to\infty}u_n(x)
=\max\Big\{\lim\limits_{n\to\infty}g_n(x),f_0(x)\Big\}
{=}\max\Big\{g(x),f_0(x)\Big\}=f_0(x)=u(x).
$$
In the similar way,
$$
\lim\limits_{n\to\infty}v_n(x)
=\min\Big\{\lim\limits_{n\to\infty}h_n(x),f_0(x)\Big\}
{=}\min\Big\{h(x),f_0(x)\Big\}=f_0(x)=v(x).
$$

Let us consider the case $x\notin Y$. Then $\varphi(x)>0$, and so $n\varphi(x)\to+\infty$ as $n\to\infty$. Choose $n_0\in\mathbb{N}$ such, that $n\varphi(x)>1$ for $n\ge n_0$. Hence, 
$$f_0(x)-n\varphi(x)\le0\le g_n(x)\ \ \text{and}\ \  h_n(x)\le 1\le f_0(x)+n\varphi(x).$$
Then $u_n(x)=g_n(x)$ and $v_n(x)=h_n(x)$ for any $n\ge n_0$. Thus,
$$
\lim\limits_{n\to\infty}u_n(x)=\lim\limits_{n\to\infty}g_n(x)=g(x)=u(x)
$$
and
$$
\lim\limits_{n\to\infty}v_n(x)=\lim\limits_{n\to\infty}h_n(x)=h(x)=v(x).
$$

The Tong theorem \cite{T}  yields that there exists a continuous function $f\colon X\rightarrow\mathbb{I}$ such that $u(x)\leq f(x)\leq v(x)$ for any $x\in X$. Thus, $g(x)\leq u(x)\leq f(x)\leq v(x)\leq h(x)$ on $X$ and $u(y)\leq f(y)\leq v(y)=f_{0}(y)=u(y)$ for any $y\in Y$. Therefore, $f(y)=f_{0}(y)$ for each $y\in Y$.
\end{proof}

\begin{corollary}\label{l6} Let $X$  be  a perfectly normal space, $Y$  be  a closed  subspace of $X$, $(g,h)$  be a pair of Hahn on $X$ and $f_{0}:Y\rightarrow\overline{\mathbb{R}}$ be a continuous function such that $g(y)\leq f_{0}(y)\leq h(y)$ for any $y\in Y$. Then there exists a continuous function $f:X\rightarrow\overline{\mathbb{R}}$ such that $f(y)=f_{0}(y)$ for any $y\in Y$ and $g(x)\leq f(x)\leq h(x)$ for any $x\in X$.
\end{corollary}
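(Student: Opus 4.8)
The plan is to deduce this directly from Proposition~\ref{t2}. Compared with that proposition, three things must be upgraded in order to apply it: the closed set $Y$ must be shown to be functionally closed, the pair of Hahn $(g,h)$ must be shown to be a countable pair of Hahn, and the function $f_0$, which here is given only on $Y$, must first be extended to a continuous function on all of $X$.

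First I would recall that in a perfectly normal space every closed set is a zero-set. Indeed, if $Y$ is closed, then by perfect normality $X\setminus Y$ is an $F_\sigma$, so $Y=\bigcap_{n\in\mathbb N}G_n$ for some open sets $G_n$; choosing, by normality, continuous functions $\alpha_n:X\to[0;1]$ with $\alpha_n|_Y=0$ and $\alpha_n|_{X\setminus G_n}=1$, the function $\alpha=\sum_{n\in\mathbb N}2^{-n}\alpha_n$ is continuous with $\alpha^{-1}(0)=Y$, so $Y$ is functionally closed. Next, by the Tong theorem \cite{T} every upper (resp. lower) semicontinuous function on a perfectly normal space is the pointwise limit of a decreasing (resp. increasing) sequence of continuous functions; applying this to $g$ and to $h$ shows that the pair of Hahn $(g,h)$ is in fact a countable pair of Hahn. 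Finally, using an increasing homeomorphism of $\overline{\mathbb R}$ onto $\mathbb I=[0;1]$ together with the Tietze--Urysohn extension theorem for the normal space $X$ and its closed subspace $Y$, I would extend $f_0$ to a continuous function $\widetilde f_0:X\to\overline{\mathbb R}$; since $\widetilde f_0$ agrees with $f_0$ on $Y$, it still satisfies $g(y)\le\widetilde f_0(y)\le h(y)$ for every $y\in Y$.

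With these preparations the hypotheses of Proposition~\ref{t2} are satisfied for the data $X$, the functionally closed subspace $Y$, the countable pair of Hahn $(g,h)$, and the continuous function $\widetilde f_0:X\to\overline{\mathbb R}$ with $g(y)\le\widetilde f_0(y)\le h(y)$ on $Y$. That proposition then produces a continuous function $f:X\to\overline{\mathbb R}$ with $f(y)=\widetilde f_0(y)=f_0(y)$ for $y\in Y$ and $g(x)\le f(x)\le h(x)$ for all $x\in X$, which is exactly the assertion. I do not expect any genuine obstacle here; the only points needing a little care are the standard verification that a closed set in a perfectly normal space is functionally closed and the correct invocation of Tong's theorem to pass from semicontinuity to monotone sequences of continuous functions.
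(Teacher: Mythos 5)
Your proof is correct and follows essentially the same route as the paper: extend $f_0$ by Tietze--Urysohn, use Tong's theorem to see that $(g,h)$ is a countable pair of Hahn, and then apply Proposition~\ref{t2}. The only difference is that you explicitly verify that a closed subset of a perfectly normal space is functionally closed (a hypothesis of Proposition~\ref{t2} that the paper uses silently), which is a welcome addition rather than a divergence.
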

\begin{proof}
By the Tietze-Urysohn theorem \cite[2.1.6.]{En} there exists a continuous extension $f_1:X\to\overline{\mathbb{R}}$ of the function $f_0$. Moreover, the Tong theorem \cite{T} implies that $(g,h)$ is a countable pair of Hahn. It remains to apply Proposition~\ref{t2} to the pair $(g,h)$ and the function $f_1$.
\end{proof}

Now we pass to the characterization of stable pairs of Hahn. A function $f:X\to Y$ is said to be \emph{$\sigma$-continuous} if there exists a sequence of closed subsets $X_n$ of $X$ such that $f|_{X_n}$ is continuous for each $n\in\mathbb{N}$.

\begin{proposition}
Let $X$  be  a topological space and $g,h:X\to\overline{\mathbb{R}}$. Then the following conditions are equivalent:
\begin{itemize}
\item[$(i)$]  $(g,h)$ is a stable pair of Hahn;
\item[$(ii)$]  $(g,h)$ is a countable pair of Hahn and $g$ and $h$ are functions of the first stable Baire class;
\end{itemize}
Moreover, if $X$ is normal then these properties are equivalent to the following two properties.
\begin{itemize}
\item[$(iii)$]  $(g,h)$ is a pair of Hahn and $g$ and $h$ are functions of the first stable Baire class;
\item[$(iv)$]  $(g,h)$ is a pair of Hahn and $g$ and $h$ are $\sigma$-continuous;
\end{itemize}
\end{proposition}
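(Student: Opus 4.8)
The plan is to prove $(i)\Leftrightarrow(ii)$ for an arbitrary topological space $X$ and then, under normality, to close the cycle $(i)\Rightarrow(iii)\Rightarrow(iv)\Rightarrow(i)$. The easy implications go as follows. If $(g,h)$ is a stable pair of Hahn with witnesses $u_n$, put $g_n=\min_{k\le n}u_k$ and $h_n=\max_{k\le n}u_k$; these are continuous, $g_n\searrow g$, $h_n\nearrow h$, $g\le h$, so $(g,h)$ is a countable pair of Hahn, and since at each point the minimum defining $g(x)$ and the maximum defining $h(x)$ are attained, in fact $g_n\mathop{\longrightarrow}\limits^{d}g$ and $h_n\mathop{\longrightarrow}\limits^{d}h$, so $g,h$ are of the first stable Baire class. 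This gives $(i)\Rightarrow(ii)$ and $(i)\Rightarrow(iii)$. For $(iii)\Rightarrow(iv)$ it remains to upgrade the Baire-class property to $\sigma$-continuity: if $f_n\mathop{\longrightarrow}\limits^{d}f$ with $f_n$ continuous, the ``stabilization sets'' $A_k=\bigcap_{n\ge k}\{x:f_n(x)=f_k(x)\}$ are closed, increase to $X$, and $f|_{A_k}=f_k|_{A_k}$ is continuous, so $f$ is $\sigma$-continuous.

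The main step is $(ii)\Rightarrow(i)$. Starting from continuous $p_n\mathop{\longrightarrow}\limits^{d}g$ and $q_n\mathop{\longrightarrow}\limits^{d}h$, I would form the stabilization sets $A_k$ for $(p_n)$ and $B_k$ for $(q_n)$ and observe that they are not merely closed but \emph{functionally} closed: since $\overline{\mathbb R}$ is metrizable, each set $\{x:p_n(x)=p_k(x)\}$ is a zero-set of a continuous function, and a countable intersection of zero-sets is again a zero-set. On $A_k$ one has $g=p_k\le h$, so $p_k$ satisfies the hypotheses imposed on $f_0$ in Proposition~\ref{t2}; applying that proposition to the countable pair of Hahn $(g,h)$, the functionally closed set $A_k$ and the function $p_k$ yields a continuous $\widehat p_k\colon X\to\overline{\mathbb R}$ with $\widehat p_k|_{A_k}=g|_{A_k}$ and $g\le\widehat p_k\le h$ on $X$; dually one gets continuous $\widehat q_k$ with $\widehat q_k|_{B_k}=h|_{B_k}$ and $g\le\widehat q_k\le h$. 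Enumerating $\{\widehat p_k:k\in\mathbb N\}\cup\{\widehat q_k:k\in\mathbb N\}$ as one sequence $(u_n)$, every $u_n$ lies between $g$ and $h$, while $\bigcup_kA_k=\bigcup_kB_k=X$ forces $\min_nu_n=g$ and $\max_nu_n=h$ pointwise, with the extrema attained; hence $(g,h)$ is a stable pair of Hahn.

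For $(iv)\Rightarrow(i)$ with $X$ normal I would use the Tong insertion theorem \cite{T} in a form with prescribed values on a closed set. Let $X=\bigcup_nX_n=\bigcup_nX_n'$ be increasing closed covers with $g|_{X_n}$ and $h|_{X_n'}$ continuous. For fixed $n$, let $g^{+}_n$ equal $g$ on $X_n$ and $h$ on $X\setminus X_n$; then $g\le g^{+}_n\le h$, and $g^{+}_n$ is lower semicontinuous (on the open set $X\setminus X_n$ it coincides with $h$, and at a point of $X_n$ lower semicontinuity follows from continuity of $g|_{X_n}$, lower semicontinuity of $h$ and $g\le h$). So $(g,g^{+}_n)$ is a pair of Hahn, and inserting a continuous $\psi_n$ with $g\le\psi_n\le g^{+}_n$ gives $\psi_n|_{X_n}=g|_{X_n}$ and $g\le\psi_n\le h$. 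Symmetrically, with $h^{-}_n$ equal to $h$ on $X_n'$ and to $g$ off $X_n'$, the pair $(h^{-}_n,h)$ is a pair of Hahn and produces a continuous $\chi_n$ with $g\le\chi_n\le h$ and $\chi_n|_{X_n'}=h|_{X_n'}$. Then $\{\psi_n\}\cup\{\chi_n\}$ witnesses that $(g,h)$ is a stable pair of Hahn, closing the cycle.

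I expect the main obstacle to be the construction inside $(ii)\Rightarrow(i)$: the key is to introduce the stabilization sets and to see that they are \emph{functionally} closed, which is exactly what makes Proposition~\ref{t2} applicable with no separation hypothesis on $X$. In $(iv)\Rightarrow(i)$ the only subtle point is the semicontinuity check for the glued functions $g^{+}_n$ and $h^{-}_n$; everything else --- partial extrema and their monotonicity, attainment of the minima and maxima, and amalgamating two countable families into one --- is routine bookkeeping.
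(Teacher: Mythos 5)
Your argument is correct, and it reaches the conclusion by a genuinely different route in the normal-space half. For $(i)\Leftrightarrow(ii)$ you essentially reproduce the paper's proof, except that where the paper invokes \cite[Theorem 3]{BKMM} to decompose a first stable Baire class function over functionally closed sets, you rederive that decomposition inline via the stabilization sets $A_k=\bigcap_{n\ge k}\{x:f_n(x)=f_k(x)\}$, correctly noting that equalizers of continuous $\overline{\mathbb R}$-valued maps are zero-sets and that countable intersections of zero-sets are zero-sets, which is exactly what makes Proposition~\ref{t2} applicable without separation axioms (the paper's reference to ``Proposition~\ref{t1}'' at that point is a typo for Proposition~\ref{t2}, which is what you and the paper both actually use). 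For the ``moreover'' part the paper closes the equivalences by citation: $(iii)\Leftrightarrow(iv)$ from \cite[Theorem 4]{BKMM} and $(iii)\Rightarrow(ii)$ from \cite[Theorems 2 and 3]{MK}. You instead run the cycle $(i)\Rightarrow(iii)\Rightarrow(iv)\Rightarrow(i)$, proving $(iii)\Rightarrow(iv)$ directly from the stabilization sets and $(iv)\Rightarrow(i)$ by a self-contained gluing argument: the functions $g^{+}_n$ (equal to $g$ on $X_n$ and to $h$ off $X_n$) are lower semicontinuous, so the Hahn--Tong--Kat\v{e}tov insertion theorem on the normal space $X$ produces continuous $\psi_n$ pinched between $g$ and $g^{+}_n$, hence equal to $g$ on $X_n$ and bounded by $h$ everywhere; your semicontinuity check for the glued functions is sound. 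The trade-off is that your proof is longer but self-contained, replacing two external citations with explicit constructions, while the paper's is shorter but leans on results from \cite{BKMM} and \cite{MK}; both are valid.
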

\begin{proof} $(i)\Rightarrow(ii)$. Let $(g,h)$ be a stable pair of Hahn.
Obviously, $(g,h)$ is a countable pair of Hahn. By the definition, there exists a sequence of continuous functions $u_n:X\to\overline{\mathbb{R}}$ such that
$$g(x)=\min\limits_{n\in\mathbb N}u_n(x)
\ \ \ \text{and}\ \ \
h(x)=\max\limits_{n\in\mathbb N}u_n(x)
\ \ \ \text{for any }x\in X.
$$
Consider the functions $g_n,h_n:X\to\overline{\mathbb{R}}$,
$$
h_n(x)=\max\limits_{k=1,\dots,n}u_k(x)
\ \ \ \text{and}\ \ \
g_n(x)=\min\limits_{k=1,\dots,n}u_k(x)
\ \ \ \text{for any }x\in X.
$$
Fix $x\in X$ and choose $n_x,m_x\in\mathbb{N}$ such that ${u_{n_x}(x)=g(x)}$ and ${u_{m_x}(x)=h(x)}$. Put $k_x=\max\{n_x,m_x\}$. If $n\ge k_x$, then $g_n(x)=g(x)$ and $h_n(x)=h(x)$. Therefore, $g_n\mathop{\longrightarrow}\limits^d g$ and $h_n\mathop{\longrightarrow}\limits^d h$. Thus, $g$ and $h$ are functions of the first stable Baire class.

$(ii)\Rightarrow(i)$. By \cite[Theorem 3]{BKMM} we conclude that there exist sequences of functionally closed sets $A_n$ and $B_n$  and continuous functions $g_n,h_n:X\to\overline{\mathbb{R}}$ such that  $g_n|_{A_n}=g|_{A_n}$ and $h_n|_{B_n}=h|_{B_n}$ for any $n\in\mathbb{N}$ and $\bigcup\limits_{n=1}^\infty A_n=\bigcup\limits_{m=1}^\infty B_m=X$. By Proposition~\ref{t1} for any $n\in\mathbb N$ there exist continuous functions $u_{2n-1},u_{2n}:X\to\overline{\mathbb{R}}$  such that $g_n|_{A_n}=u_{2n-1}|_{A_n}$, $h_n|_{B_n}=u_{2n}|_{B_n}$ and $g(x)\le u_{2n-1}(x),u_{2n}(x)\le h(x)$ for any $x\in X$. Show, that
$$g(x)=\min\limits_{k\in\mathbb N}u_k(x)
\ \ \ \text{and}\ \ \
h(x)=\max\limits_{k\in\mathbb N}u_k(x)
\ \ \ \text{for any }x\in X.
$$
Fix $x\in X$. Then there are $n\in\mathbb N$ and $m\in\mathbb N$ such that $x\in A_n$ and $x\in B_m$. Therefore, $u_{2n-1}(x)=g_n(x)=g(x)$ and $u_{2m}(x)=h_m(x)=h(x)$. Since $g(x)\le u_k(x)\le h(x)$ for any $x\in X$, we obtain the needed equalities.

$(iii)\Leftrightarrow(iv)$. It follows from \cite[Theorem 4]{BKMM}

$(ii)\Rightarrow(iii)$. Evidently.

$(iii)\Rightarrow(ii)$. It follows from \cite[Theorem 2 and 3]{MK}.
\end{proof}

Note, that in the previous theorem in $(ii)$ we cannot omit the condition of countability of the pair of Hahn $(g,h)$. Indeed, the examples of functions from \cite[Example 1 and 2]{MK} are, in fact, functions of the first stable Baire class but they are not the limits of monotone sequences of continuous functions.

\section{The space $\alpha T$}\label{sec_alphaT}

\begin{lemma} \label{l7}
Let $\alpha T=T\cup\{\infty\}$  be  Alexandroff's compactification  of a discrete space $T$ and $f:\alpha T\rightarrow \mathbb{R}$  be a Baire one function. Then there exists a countable set $S\subseteq T$ such that $f$ is constant on $\alpha T\setminus S$.
\end{lemma}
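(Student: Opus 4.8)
The plan is to use the Baire characterization theorem: since $f:\alpha T\to\mathbb R$ is Baire one and $\alpha T$ is compact metrizable-like (actually $\alpha T$ need not be metrizable, but it is compact Hausdorff and every Baire one function has a point of continuity on every closed subspace, by the Baire category theorem applied to the nonempty complete-type arguments — here I would instead argue directly using the oscillation). Let me set $c=f(\infty)$. The key claim is that $f(t)=c$ for all but countably many $t\in T$. Suppose not; then for some $\varepsilon>0$ the set $S_\varepsilon=\{t\in T: |f(t)-c|\ge\varepsilon\}$ is uncountable, in particular infinite. Since $\alpha T$ is the one-point compactification of a discrete space, any infinite subset of $T$ has $\infty$ as its only accumulation point, so in fact $S_\varepsilon$ is a sequence converging to $\infty$ (more precisely, any neighborhood of $\infty$ contains all but finitely many points of $S_\varepsilon$). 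Hence there is a sequence $t_k\in S_\varepsilon$ with $t_k\to\infty$ but $|f(t_k)-c|\ge\varepsilon$, contradicting that $f$ has a sequential-style continuity property at $\infty$. But $f$ is only Baire one, not continuous, so this naive argument fails at $\infty$ — that is the main obstacle.

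To fix this, I would not look at $\infty$ but rather exploit Baire one-ness globally. First I would reduce to the bounded case by composing with a homeomorphism $\mathbb R\to(0,1)$ (this preserves Baire class one). Write $f=\lim_n f_n$ pointwise with $f_n:\alpha T\to\mathbb R$ continuous. The essential structural fact is: a continuous function on $\alpha T$ takes, outside a finite set, values arbitrarily close to its value at $\infty$; equivalently, for each $n$ and each $m\in\mathbb N$ the set $E_{n,m}=\{t\in T:|f_n(t)-f_n(\infty)|\ge 1/m\}$ is finite. Now for the limit $f$: fix $c=f(\infty)=\lim_n f_n(\infty)$. I would show the ``exceptional set'' $S=\{t\in T:f(t)\ne c\}$ is countable by writing $S=\bigcup_{m}\{t:|f(t)-c|\ge1/m\}$ and bounding each piece. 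For a fixed $m$, if $|f(t)-c|\ge 1/m$ then for all large $n$ (depending on $t$) we have $|f_n(t)-f_n(\infty)|\ge 1/(2m)$ say, i.e. $t\in E_{n,2m}$ for all $n\ge N(t)$; hence $\{t:|f(t)-c|\ge1/m\}\subseteq\bigcup_{N}\bigcap_{n\ge N}E_{n,2m}$, and each $\bigcap_{n\ge N}E_{n,2m}$ is a subset of the finite set $E_{N,2m}$, so it is finite, and the countable union over $N$ is countable. Therefore $S$ is a countable union of countable sets, hence countable, and $f\equiv c$ on $\alpha T\setminus S$.

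I expect the delicate point to be making the ``for all large $n$, $|f_n(t)-f_n(\infty)|$ is bounded below'' step precise, since $f_n(\infty)\to c$ and $f_n(t)\to f(t)$ only give control in the limit; choosing the threshold $1/(2m)$ and an index $N(t)$ past which both $|f_n(t)-f(t)|$ and $|f_n(\infty)-c|$ are, say, less than $1/(4m)$ handles it cleanly. A cosmetic alternative, if one prefers to avoid the explicit $f_n$: invoke that a Baire one function on a (Lindel\"of, or here compact) space is continuous on a dense $G_\delta$, note the only non-isolated point is $\infty$, so $f$ is continuous at $\infty$, and then the naive first-paragraph argument goes through — but this requires knowing $\alpha T$ has the relevant Baire-type property when $T$ is uncountable, so I would present the direct $f_n$ argument above as the safe route.
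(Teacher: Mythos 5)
Your argument is correct and follows essentially the same route as the paper's: both rest on writing $f=\lim_n f_n$ with $f_n$ continuous and on the observation that, by continuity at $\infty$, each set $\{t\in T:|f_n(t)-f_n(\infty)|\ge 1/m\}$ is finite. The paper's bookkeeping is slightly lighter --- it first proves the lemma for continuous functions (each $f_n$ equals $f_n(\infty)$ off a countable set $S_n$), puts $S=\bigcup_n S_n$, and passes to the limit $f(x)=\lim_n f_n(x)=\lim_n f_n(\infty)=f(\infty)$ for $x\notin S$, thereby avoiding your explicit $\tfrac{1}{4m}$ estimates --- but the substance is identical.
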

\begin{proof}
Firstly, suppose that $f$ is continuous. Recall that a neighborhood of $\infty$ in $\alpha T$ is the complement of a finite subset of $T$.
Therefore, for any $n\in \mathbb{N}$ there exists a finite set
$E_n\subseteq T$ such that $f(\alpha T\setminus E_n)\subseteq(f(\infty)-\tfrac{1}{n};f(\infty)+\tfrac{1}{n})$.  Put $S=\bigcup\limits_{n=1}^{\infty} E_n$. Obviously, that $S$ is a countable and
$$f(\alpha T\setminus S)=f\bigg(\bigcap\limits_{n=1}^{\infty}(\alpha T\setminus E_n)\bigg)\subseteq\bigcap\limits_{n=1}^{\infty}f\big((\alpha T\setminus E_n)\big)
$$
$$
\subseteq\bigcap\limits_{n=1}^{\infty}\big(f(\infty)-\tfrac{1}{n};f(\infty)+\tfrac{1}{n}\big)=\big\{f(\infty)\big\}$$
Therefore, $f$ is constant on $\alpha T\setminus S$.

Now let us consider the case where $f$ is Baire one. Then there exists a sequence of continuous functions  $f_n:\alpha T\rightarrow \mathbb{R}$ such that $f_n(x)\rightarrow f(x)$ for any $x\in\alpha T$. By the previous part, for any $n$ there exists a countable set $S_n\subseteq T$ such that $f_n$ is constant on $\alpha T\setminus S_n$. Put $S=\bigcup\limits_{n=1}^{\infty} S_n$. Then $S$ is a countable set too. Let $x\in\alpha T\setminus S=\bigcap\limits_{n=1}^{\infty}(\alpha T\setminus S_n)$. Then for any $n\in \mathbb{N}$ we have that $x\in\alpha T\setminus S_n$, and so, $f_n(x)=f_n(\infty)$. Therefore, $$f(x)=\lim_{n\rightarrow\infty}f_n(x)=\lim_{n\rightarrow\infty}f_n(\infty)=f(\infty)$$
for any $x\in\alpha T\setminus S$.
\end{proof}

\begin{example} Let $X=Y=\alpha T=T\cup\{\infty\}$  be  Alexandroff's compactification of an uncountable discrete space $T$. Then there exists a separately continuous function $f:X\times Y\rightarrow \mathbb{R}$ such that $\vee_f$ and $\wedge_f$ are not Baire one.
\end{example} \label{tv1}
\begin{proof}
Since $\big|\{0,1,2\}\times T\big|=3\cdot|T|=|T|$, there exists a bijection $\varphi\colon\{0,1,2\}\times T\rightarrow T$. Set $T_i=\varphi(\{i\}\times T)$ for $i=0,1,2$. Then $T=T_0\sqcup T_1\sqcup T_2$ and $|T_0|=|T_1|=|T_2|=|T|$.
Define  $f:X\times Y\rightarrow \mathbb{R}$,
$$f(x,y)=\left\{
         \begin{array}{rl}
           1,& \text{if }x=y\in T_1 \\
           -1,& \text{if }x=y\in T_2\\
           0,& \text{otherwise},
         \end{array}
         \right.\ \ \ \text{ for any }(x,y)\in X\times Y
$$
For each $x\in X=\alpha T$ the function $f^x$ is equal to zero everywhere  except at most at one point of $T$. Then $f^x$ is continuous. Analogously, we show that  $f_y$ is continuous for any $y\in Y=\alpha T$. Therefore, $f$ is a  separately continuous function.

Denote $g=\wedge_f$ and $h=\vee_{\!f}$. For any $x\in T_1$ we have that $f(x,x)=1$, and then   $h(x)=1$. If $x\notin T_1$ then $f(x,y)\leq0$ for any $y$ and $f(x,\infty)=0$. So, $h(x)=0$.
Let   $\chi_A:\alpha T\to \{0,1\}$, $\chi_A(t)=1$ if $x\in A$ and $\chi_A(t)=0$ if $x\notin A$ for any $A\subseteq \alpha T$.
Therefore, $h=\chi_{T_1}$. Analogously, we show that $g=-\chi_{T_2}$. Thus, for any countable set $S$ we have that $T_i\nsubseteq S$ for any $i=0,1,2$. Hence, $g$ and $h$ are not constant on $\alpha T\setminus S$. Thus, by Lemma~\ref{l7} $g$ and $h$ are not Baire one.
\end{proof}

\begin{proposition}\label{tv2}
 Let $X$  be  a separable topological space, $Y=\alpha T$ and $f:X\times Y\rightarrow\mathbb{R}$ be  a separately continuous function. Then $(\wedge_f,\vee_f)$ is a stable pair of Hahn.
\end{proposition}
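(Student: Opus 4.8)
The plan is to reduce the statement to the already established countable case, Theorem~\ref{t1}, by exploiting separability of $X$ to collapse the (possibly uncountable) index set $T$ to a countable one. The key point is that almost all point-sections $f_t=f(\cdot,t)$, $t\in T$, must agree with the section $f_\infty=f(\cdot,\infty)$ at infinity.

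First I would fix a countable dense subset $D=\{x_k:k\in\mathbb{N}\}$ of $X$. For each $k$ the function $f^{x_k}\colon\alpha T\to\mathbb{R}$ is continuous, hence Baire one, so Lemma~\ref{l7} gives a countable set $S_k\subseteq T$ on whose complement $f^{x_k}$ is constant; since $\infty\notin S_k$, that constant equals $f(x_k,\infty)$, i.e.\ $f(x_k,t)=f(x_k,\infty)$ for every $t\in T\setminus S_k$. Put $S=\bigcup_{k\in\mathbb{N}}S_k$, a countable subset of $T$. Now for every fixed $t\in T\setminus S$ the function $f_t-f_\infty\colon X\to\mathbb{R}$ is continuous and vanishes on the dense set $D$, hence $f_t-f_\infty\equiv0$; that is, $f(x,t)=f(x,\infty)$ for all $x\in X$ and all $t\in T\setminus S$.

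Next I would let $Y_0=S\cup\{\infty\}$. Its complement $T\setminus S$ is open in $\alpha T$, so $Y_0$ is a closed, hence compact, and countable subspace of $\alpha T$, and $f|_{X\times Y_0}$ is clearly separately continuous. Because $T\setminus S\neq\varnothing$ and every value $f(x,t)$ with $t\in T\setminus S$ already occurs as $f(x,\infty)$, for each $x\in X$ we get $\sup_{y\in\alpha T}f(x,y)=\sup_{y\in Y_0}f(x,y)$ and, symmetrically, $\inf_{y\in\alpha T}f(x,y)=\inf_{y\in Y_0}f(x,y)$. Hence $\wedge_f=\wedge_{f|_{X\times Y_0}}$ and $\vee_f=\vee_{f|_{X\times Y_0}}$. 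Finally, applying Theorem~\ref{t1} to $X$, the countable compact space $Y_0$ and the separately continuous function $f|_{X\times Y_0}$, the pair $(\wedge_{f|_{X\times Y_0}},\vee_{f|_{X\times Y_0}})$ is a stable pair of Hahn, and by the previous step this is precisely $(\wedge_f,\vee_f)$.

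The only non-routine ingredient is this reduction: combining the ``eventual constancy at $\infty$'' supplied by Lemma~\ref{l7} with the elementary fact that a real-valued continuous function vanishing on a dense set vanishes identically is exactly what lets the uncountable set $T$ be replaced by the countable set $S$; everything afterwards is bookkeeping. The degenerate possibilities that $S$ be finite or empty need a glance but cause no difficulty, since $T\setminus S$ is always nonempty.
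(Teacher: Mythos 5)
Your proof is correct and follows essentially the same route as the paper: use Lemma~\ref{l7} on a countable dense set to produce a countable $S\subseteq T$, extend the constancy of the sections from the dense set to all of $X$ by continuity in the first variable, and then apply Theorem~\ref{t1} to the countable compact $S\cup\{\infty\}$. The only cosmetic difference is that you establish $f(\cdot,t)\equiv f(\cdot,\infty)$ for $t\in T\setminus S$ by noting that the continuous function $f_t-f_\infty$ vanishes on a dense set, whereas the paper runs a net argument for each fixed $x$ — these are equivalent, and your incidental claim that $T\setminus S$ is always nonempty (false if $T$ is countable) is harmless since in that case $Y_0=Y$.
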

\begin{proof}
Let $A$ be a countable dense subset of $X$. By Lemma~\ref{l7} for any $x\in A$ there is a countable set $S_x\subseteq T$ such that $f^x$ is constant on $Y\setminus S_x$. Let $S=\bigcup\limits_{x\in A}S_x$. Then $S$ is a countable set and $f^x$ is constant on $Y\setminus S$ for any $x\in A$.

Fix $x\in X$. Since $\overline{A}=X$, there exists a net $(x_m)_{m\in M}$ in $A$ such that $x_m\rightarrow x$. Therefore, for any $y\in Y\setminus S$ we have that
$$f^x(y)=f_y(x)=\lim_{m\in M}f_y(x_m)=\lim_{m\in M}f^{x_m}(y)
$$
$$=\lim_{m\in M}f^{x_m}(\infty)=
\lim_{m\in M}f_{\infty}(x_m)=f_{\infty}(x)=f^x(\infty).$$ Thus, $f^x$ is constant on $Y\setminus S$ for any $x\in X$.

Obviously, $Y_1=S\cup \{\infty\}$ is Alexandroff's compactification of the discrete space $S$ in the case where $S$ is an infinity set. Thus, $Y_1$ is a countable compact. Set $f_1=f|_{X\times Y_1}$.  So, Theorem~\ref{t1} yields that $(\wedge_{f_1},\vee_{f_1})$ is a stable pair of Hahn.  Since $f(x,y)=f(x,\infty)$ for any $x\in X$ and $y\in Y\setminus S$, we conclude that
$$\wedge_{f}(x)=\inf_{y\in Y}f(x,y)=\min\{\inf_{y\in S}f(x,y),\inf_{y\in Y\setminus S}f(x,y)\}$$
$$=\min\{\inf_{y\in S}f(x,y),f(x,\infty)\}=\inf_{y\in S\cup \{\infty\}}f(x,y)=\inf_{y\in Y_1}f_1(x,y)=\vee_{f_1}(x)$$
Similarly, we show that $\vee_{f}=\vee_{f_1}$. Thus, $(\wedge_{f},\vee_{f})=(\wedge_{f_1},\vee_{f_1})$ is a stable pair of Hahn.
\end{proof}

\section{Scattered compacts}

Recall that a topological space is called \textit{scattered} if any non-empty subset $A$ of $X$ has an isolated point in $A$. Denote
$$
A^{(0)}=A,\ \ \ A^{(1)}=\Big\{x\in A:x\in \overline{A\setminus\{x\}}\Big\}
$$
and for any ordinal $\alpha>1$ we define recursively
$$
A^{(\alpha)}=\Big(\bigcap\limits_{\xi<\alpha}A^{(\xi)}\Big)^{(1)}.
$$ 
Note that $A^{(1)}$ is closed in $A$. Therefore, $A^{(\alpha)}$ is closed in $A$ for any $\alpha$.
Since $\big(X^{(\alpha)}\big)$ is a decreasing transfinite sequence, there exists an ordinal $\alpha$ such that $A^{(\alpha)}=A^{(\alpha+1)}$. The \textit{scattered rank} is the ordinal
$$
r(A)=\min\big\{\alpha:A^{(\alpha)}=A^{(\alpha+1)}\big\}.
$$
Let $X$ be a scattered space, $\alpha=r(X)$ and $E=X^{(\alpha)}$. So, $E^{(1)}=X^{(\alpha+1)}=E$. Therefore, $E$ has no any isolated points, and then $X^{(\alpha)}=E=\emptyset$. Conversely, let $X^{(\alpha)}=\emptyset$ and $A$ be a subset of $X$ without isolated points. Then $A^{(1)}=A$. Therefore, $A=A^{(\alpha)}\subseteq X^{(\alpha)}=\emptyset$. Thus, we have proved that $X$ is scattered if and only if $X^{(r(X))}=\emptyset$ (see, \cite[Note 1.2]{KR})
\begin{proposition}\label{tv3}
Let $X$  be  a first countable scattered compact. Then $X$ is countable.
\end{proposition}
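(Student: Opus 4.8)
The plan is to prove the claim by transfinite induction on the scattered rank $r(X)$, using the top Cantor--Bendixson level of $X$ to carry out the reduction; here, as throughout the paper, ``compact'' means compact Hausdorff, so finite subsets are closed and a compact discrete space is finite. The base case $r(X)=0$ is immediate: then $X=X^{(0)}=X^{(1)}$ has no isolated point, and since $X$ is scattered we get $X=X^{(r(X))}=\emptyset$, which is countable.

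For the inductive step fix $\gamma=r(X)>0$, assume the statement for all scattered compacts of rank $<\gamma$, and set $E=\bigcap_{\xi<\gamma}X^{(\xi)}$. Note that $X^{(\gamma)}=E^{(1)}$ in both the successor and the limit case, so scatteredness gives $E^{(1)}=\emptyset$; thus $E$ is discrete, and being closed in the compact space $X$ it is finite. Moreover $E\neq\emptyset$: otherwise the nested nonempty closed sets $X^{(\xi)}$, $\xi<\gamma$, would have empty intersection, so by compactness of $X$ some $X^{(\xi_0)}$ with $\xi_0<\gamma$ would be empty, contradicting the minimality of $r(X)$. Write $E=\{p_1,\dots,p_k\}$ with $k\geq 1$. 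Using first countability, choose for each $i$ a decreasing sequence $(V^i_n)_n$ of open neighbourhoods of $p_i$ with $\bigcap_n V^i_n=\{p_i\}$, and put $C_n=X\setminus\bigcup_{i=1}^k V^i_n$. Each $C_n$ is closed in $X$, hence a compact, first countable, scattered space, and $C_n\cap E=\emptyset$. A short pigeonhole argument shows $\bigcap_n\bigcup_i V^i_n=E$, hence $X=E\cup\bigcup_n C_n$; so it is enough to prove that each $C_n$ is countable, and for that I intend to apply the induction hypothesis once I have checked that $r(C_n)<\gamma$.

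The inequality $r(C_n)<\gamma$ is the main point. First, a routine transfinite induction on $\alpha$ --- using that $C_n$ is closed in $X$ and that $A\mapsto A^{(1)}$ and taking intersections are monotone operations --- yields $C_n^{(\alpha)}\subseteq X^{(\alpha)}$ for every ordinal $\alpha$, and therefore $C_n^{(\alpha)}\subseteq C_n\cap X^{(\alpha)}$. Now the sets $C_n\cap X^{(\xi)}$, $\xi<\gamma$, are closed in the compact space $C_n$, form a decreasing family, and have intersection $C_n\cap E=\emptyset$; by compactness some $C_n\cap X^{(\xi_0)}$ with $\xi_0<\gamma$ is already empty, whence $C_n^{(\xi_0)}=\emptyset$ and $r(C_n)\leq\xi_0<\gamma$. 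Consequently $C_n$ is countable, and $X=E\cup\bigcup_n C_n$ is the union of a finite set with countably many countable sets, hence countable. I expect the only real work to be in this rank-drop step and in the bookkeeping forced by a possibly limit ordinal $\gamma$ (the finiteness and non-emptiness of $E$, and the step from $C_n\cap X^{(\xi_0)}=\emptyset$ to $C_n^{(\xi_0)}=\emptyset$); the identity $\bigcap_n\bigcup_i V^i_n=E$ and the base case are routine.
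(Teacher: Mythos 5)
Your proof is correct and follows essentially the same route as the paper: transfinite induction on the scattered rank, finiteness of the top Cantor--Bendixson level $E=\bigcap_{\xi<\gamma}X^{(\xi)}$, a $G_\delta$ representation of $E$ obtained from first countability, and a compactness argument showing the closed complements have strictly smaller rank. The only cosmetic differences are that you phrase the rank-drop via the finite intersection property rather than an open cover of $X\setminus X^{(\xi)}$'s, and you make explicit the monotonicity $C_n^{(\alpha)}\subseteq X^{(\alpha)}$ and the non-emptiness of $E$, which the paper leaves implicit or omits.
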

\begin{proof}
We are going to prove this proposition using the transfinite induction by the scattered rank $\alpha=r(X)$. If $\alpha=0$ then $X^{(0)}=\emptyset$, and so $X=\emptyset$ and then $|X|=0<\aleph_0$. If $\alpha=1$ then $X^{(1)}=\emptyset$. Hence, $X$ is a discrete space. Since $X$ is compact, we obtain that $X$ is  finite, that is $|X|<\aleph_0$. 

Suppose that for some ordinal $\alpha>1$ any first countable scattered compact $K$ of the rank $r(K)<\alpha$ is countable. Consider some scattered compact $X$ with $r(X)=\alpha$ and prove that $|X|\leq\aleph_0$.
We have $X^{(\alpha)}=\emptyset$. Denote $F=\bigcap\limits_{\xi<\alpha}X^{(\xi)}$. Then, by the definition of the scattered rank, $X^{(\alpha)}=F^{(1)}=\emptyset$. Thus, the closed set $F$ has no limit point, and then $|F|<\aleph_0$. But $X$  is a first countable Hausdorff space. Therefore, every singleton in $X$ is of type $G_\delta$. Thus, $F$  is of type $G_\delta$ as well.

Consider a sequence of open sets $G_n$ such that $\bigcap\limits_{n=1}^\infty G_n=F$, and put $F_n=X\setminus G_n$. Then $F_n$ is a closed subset of $X$, and so, $F_n$ is a scattered compact.

Let us prove that $r(F_n)<\alpha$. Since $X^{(\xi)}$ is closed and $\bigcap\limits_{\xi<\alpha}X^{(\xi)}=F\subseteq G_n=X\setminus F_n$, we conclude that the open sets $U_\xi=X\setminus X^{(\xi)}$, $\xi<\alpha$, cover the compact set $F_n$. Then there exist $\xi_1,\xi_2,..., \xi_m<\alpha$ such that $F_n\subseteq\bigcup\limits_{i=1}^m U_{\xi_i}$. Let $\xi_0=\max\{\xi_1,..., \xi_m\}$. Since $X^{(\xi)}$'s decrease, we have that $U_\xi$'s  increase. Hence,
$$F_n\subseteq\bigcup\limits_{i=1}^m U_{\xi_i}=U_{\xi_0}=X\setminus X^{(\xi_0)}.$$ Then $F^{(\xi_0)}_n\subseteq X^{(\xi_0)}$ and $F^{(\xi_0)}_n\subseteq F_n\subseteq X\setminus X^{(\xi_0)}$. Thus, $F^{(\xi_0)}_n=\emptyset$ and then $r(F_n)\leq\xi_0<\alpha$.
So, by the inductive assumption we have that $F_n$ is countable. But
$$X=F\cup(X\setminus F)=F\cup\Big(X\setminus\bigcap\limits_{n=1}^\infty G_n\Big)= F\cup\Big(\bigcup\limits_{n=1}^\infty X\setminus G_n\Big)=F\cup \bigcup\limits_{n=1}^\infty F_n.$$ Therefore, $X$ is a countable set.
\end{proof}

\begin{theorem} \label{t4}
Let $X$ be a separable topological space, $Y$ be a scattered compact and $f:X\times Y\rightarrow\mathbb{\overline{R}}$ be a separately continuous function. Then $(\wedge_f,\vee_f)$ is a stable pair of Hahn.
\end{theorem}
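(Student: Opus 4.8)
The plan is to reduce Theorem~\ref{t4} to the case where $Y$ is a countable compact, where Theorem~\ref{t1} applies. The key is to show that for a separately continuous $f$ on $X\times Y$ with $X$ separable and $Y$ a scattered compact, there is a \emph{countable} closed subspace $Y_1\subseteq Y$ such that the extremal sections over $Y$ coincide with those over $Y_1$; then $(\wedge_f,\vee_f)=(\wedge_{f_1},\vee_{f_1})$ for $f_1=f|_{X\times Y_1}$, and Theorem~\ref{t1} finishes the argument (note $f_1$ is still separately continuous). This mirrors exactly the structure of Proposition~\ref{tv2}, where $Y=\alpha T$ and the countable subspace was $S\cup\{\infty\}$.

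First I would fix a countable dense set $A\subseteq X$. For each $a\in A$ the section $f^a\colon Y\to\overline{\mathbb R}$ is continuous on the scattered compact $Y$, so its range $f^a(Y)$ is a compact subset of $\overline{\mathbb R}$; I want to find a countable closed $K_a\subseteq Y$ with $f^a(K_a)=f^a(Y)$. This is the heart of the matter: a continuous image of a scattered compact into $\overline{\mathbb R}$ is a scattered (hence countable) compact subset of $\overline{\mathbb R}$, so $f^a(Y)$ is countable; picking one point of $Y$ in each fibre over $f^a(Y)$ and taking the closure gives a subspace, but one must check that this closure is still countable and that $f^a$ attains the same sup and inf on it — in fact it suffices to choose $K_a$ so that $\sup_{K_a}f^a=\sup_Y f^a$ and $\inf_{K_a}f^a=\inf_Y f^a$, and for this we only need two sequences in $Y$ approaching the extremal values, together with the (finitely many, or countably many) points achieving isolated values; alternatively one invokes that a continuous real-valued function on a scattered compact has countable range and, using that $Y$ is scattered so $Y^{(r(Y))}=\emptyset$, one builds $K_a$ by transfinite recursion picking, at each Cantor--Bendixson level, representatives of the finitely many ``new'' values. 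Then set $Y_1=\overline{\bigcup_{a\in A}K_a}$; this is a closed, hence compact, subspace of $Y$, it is scattered (a subspace of a scattered space is scattered), and I must argue it is countable — here I would like to invoke Proposition~\ref{tv3}, but that requires first countability, which $Y_1$ need not have; instead I should argue directly that a countable union of countable sets has a \emph{separable} closure, and a separable scattered compact is countable (again by the Cantor--Bendixson analysis, each level is discrete in a separable space, hence countable, and the levels are countably many).

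Next, with $Y_1$ in hand, I would extend the conclusion ``$f^x$ attains its sup and inf on $Y_1$'' from $a\in A$ to all $x\in X$ by a density/net argument exactly as in Proposition~\ref{tv2}: given $x\in X$, take a net $(x_m)$ in $A$ with $x_m\to x$; then $\vee_f(x)=\sup_Y f^x$, and for any $y\in Y$, $f^x(y)=\lim_m f^{x_m}(y)\le\lim_m\sup_{Y_1}f^{x_m}$; the subtlety is that $\sup_{Y_1}f^{x_m}$ need not converge to $\sup_{Y_1}f^x$ along the net, so instead I would use the monotone structure: since $Y_1$ is compact and $f^x$ continuous, $\sup_{Y_1}f^x$ is attained at some $y^*\in Y_1$, and then argue the other inequality by showing $\sup_Y f^x\le\sup_{Y_1}f^x$ pointwise — i.e. for each $y\in Y$ and each $\varepsilon>0$ one finds $a\in A$ close to $x$ with $f^a$ nearly equal to $f^x$ on the relevant finite set, reducing to the established fact for $a$. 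This step is routine once the right formulation is chosen; the cleanest route is to prove the pointwise statement ``for every $x\in X$, $\sup_{y\in Y}f(x,y)=\sup_{y\in Y_1}f(x,y)$ and $\inf_{y\in Y}f(x,y)=\inf_{y\in Y_1}f(x,y)$'' using semicontinuity of $\vee_f,\wedge_f$ and density of $A$.

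The main obstacle I anticipate is the countability of $Y_1=\overline{\bigcup_{a\in A}K_a}$: one cannot simply quote Proposition~\ref{tv3} because first countability is lost under this construction, so I must supply the separable-scattered-compact-is-countable fact, and one must also be careful that each $K_a$ is genuinely countable (a continuous image of a scattered compact in $\overline{\mathbb R}$ is countable — this uses that $\overline{\mathbb R}$ is second countable, so the image is a second-countable scattered compact, hence countable by a Cantor--Bendixson height argument). Once these two countability facts are established, the rest is a faithful repetition of the proof of Proposition~\ref{tv2} with $S\cup\{\infty\}$ replaced by $Y_1$.
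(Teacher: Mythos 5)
Your overall plan (reduce to a countable compact so that Theorem~\ref{t1} applies) is the right one, but two of the steps you flag as delicate are in fact genuinely broken as stated. The fatal one is the countability of $Y_1=\overline{\bigcup_{a\in A}K_a}$. Your proposed justification is that a separable scattered compact is countable because ``each Cantor--Bendixson level is discrete in a separable space, hence countable''; this is false. A separable space can have uncountable discrete subspaces, and there really are separable, scattered, \emph{uncountable} compacts: take a maximal almost disjoint family $\mathcal A$ on $\omega$, form the Mr\'owka space $\Psi=\omega\cup\mathcal A$ and its one-point compactification $\Psi^*=\Psi\cup\{\infty\}$. Then $\Psi^*$ is compact Hausdorff, scattered of height $3$, separable (the countable set $\omega$ is dense), yet uncountable since $\mathcal A$ is; its middle level $\mathcal A$ is an uncountable discrete subspace. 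The same example shows that the closure of a countable set in a scattered compact can be uncountable ($\overline{\omega}=\Psi^*$), so the passage from the countable union $\bigcup_a K_a$ to its closure destroys countability and cannot be repaired by separability alone. The second gap is the extension from $a\in A$ to all $x\in X$: in Proposition~\ref{tv2} the property transferred along the net is ``$f^x$ is constant on $Y\setminus S$'', which survives pointwise limits; the property you need, ``$f^x$ attains its supremum over $Y$ at a point of $Y_1$'', does not pass to pointwise limits (the maximizers $y_m\in Y_1$ of $f^{x_m}$ can escape, and joint continuity is unavailable), and the sketched fix is not yet an argument.

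The paper sidesteps both difficulties with a different reduction: instead of hunting for a countable subspace of $Y$, it maps $Y$ into $C_p(X)$ by $\psi(y)=f_y$. The image $\widetilde Y=\psi(Y)$ is a scattered compact (continuous Hausdorff image of a scattered compact), and restriction to a countable dense $T\subseteq X$ embeds $\widetilde Y$ homeomorphically into the metrizable space $\mathbb{R}^T$; hence $\widetilde Y$ is a metrizable, so first countable, scattered compact, and Proposition~\ref{tv3} gives that it is countable. Since $\tilde f(x,\psi(y))=f(x,y)$, the function $\tilde f$ on $X\times\widetilde Y$ has, for \emph{every} $x$, exactly the same set of values on its vertical fibre as $f$ does, so the extremal sections coincide with no density argument, and Theorem~\ref{t1} applies to $\widetilde Y$. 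In effect this proves that there are only countably many distinct sections $f_y$ --- which is precisely the countable $Y_1$ you were trying to build --- but the countability comes from metrizability of the image, not from separability of a closure. If you want to keep your formulation, replace the construction of $Y_1$ by: pick one $y$ in each fibre $\psi^{-1}(\tilde y)$, $\tilde y\in\widetilde Y$; this countable set already realizes every value of every section $f^x$, including the attained extrema.
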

\begin{proof}
Since there exists an increasing homeomorphism from $\mathbb{\overline{R}}$ to $[0;1]$, without loss of the generality we may and do assume that $f\colon X\times Y\rightarrow[0;1]\subseteq\mathbb{R}$. Set $g=\wedge_f$, $h=\vee_f$ and prove that $(g,h)$ is a stable pair of Hahn.

Let $T$  be  a countable dense subset of $X$. Consider $\psi:Y\rightarrow\mathbb{R}^X$,  $\psi(y)(x)=f(x,y)$ for any $x\in X$, $y\in Y$.  It is  wellknown that the separate continuity of $f$ implies that $\psi:Y\rightarrow C_p(X)$ is a continuous mapping. Put $\widetilde{Y}=\psi(Y)$. Then $\widetilde{Y}$ is compact. Moreover, \cite[1.9(c)]{KR} implies that $\widetilde{Y}$ is a scattered compact. Define $h:\widetilde{Y}\rightarrow\mathbb{R}^T$,  $h(\tilde{y})=\tilde{y}|_T$. Since $\widetilde{Y}\subseteq C_p(X)$ and $\overline{T}=X$, we have that $h$ is a continuous injection. Hence,  $h:\widetilde{Y}\rightarrow h(\widetilde{Y})$ is a homeomorphism. But the set $T$ is countable. Therefore, $\mathbb{R}^T$ is a metrizable space. Thus,
$h(\widetilde{Y})$ is a metrizable scattered compact. So, Proposition~\ref{tv3}  yields that $\widetilde{Y}$ is a countable set.

 Let $\tilde{f}:X\times\widetilde{Y}\rightarrow\mathbb{R}$,  $\tilde{f}(x,\tilde{y})=\tilde{y}(x)$ for any $x\in X$,  $\tilde{y}\in\widetilde{Y}$. Since $\widetilde{Y}\subseteq C_p(X)$, $\tilde{f}$ is a separately continuous function. Set $\tilde{g}=\wedge_{\tilde{f}}$ and $\tilde{h}=\vee_{\tilde{f}}$. Then by Theorem~\ref{t1} we conclude that $(\tilde{g},\tilde{h})$ is a stable pair of Hahn.
But for any $x\in X$ we have
$$g(x)=\inf_{y\in Y}f(x,y)=\inf_{y\in Y}\psi(y)(x)=\inf_{\tilde{y}\in \widetilde{Y}}\tilde{y}(x)=\inf_{\tilde{y}\in \widetilde{Y}}\tilde{f}(x,\tilde{y})=\wedge_{\tilde{f}}(x)=\tilde{g}(x)$$
where $\tilde{y}=\psi(y)$. Similarly, we show that $h(x)=\tilde{h}(x)$ for any $x\in X$. Thus, $(g,h)=(\tilde{g},\tilde{h})$ is a stable pair of Hahn.
\end{proof}

\begin{theorem} \label{t5}
Let $X$ be a compact, $Y$ be  a scattered compact  such that $X$ or $Y$ has the countable chain condition, $f:X\times Y\rightarrow\mathbb{\overline{R}}$ be a separately continuous function and $g=\wedge_f$, $h=\vee_f$. Then $(g,h)$ is a stable pair of Hahn.
\end{theorem}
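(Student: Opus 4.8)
The plan is to run the argument of Theorem~\ref{t4}, replacing separability by the countable chain condition and metrizability arguments by facts about Eberlein compacta. As there, using an increasing homeomorphism $\overline{\mathbb{R}}\cong[0;1]$ we may assume $f\colon X\times Y\to[0;1]$, and we consider the continuous map $\psi\colon Y\to C_p(X)$, $\psi(y)(x)=f(x,y)$, and put $\widetilde Y=\psi(Y)$. Then $\widetilde Y$ is compact, scattered by \cite[1.9(c)]{KR}, and bounded in $C(X)$ (it lies in $[0;1]^X$). Exactly as in Theorem~\ref{t4}, the separately continuous function $\tilde f\colon X\times\widetilde Y\to[0;1]$, $\tilde f(x,\tilde y)=\tilde y(x)$, satisfies $\wedge_f=\wedge_{\tilde f}$ and $\vee_f=\vee_{\tilde f}$, so it suffices to prove the theorem for $\tilde f$. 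By Theorem~\ref{t1} it then suffices to show that $\widetilde Y$ is a countable set; and since $\widetilde Y$ is a scattered compact, by Proposition~\ref{tv3} it is enough to show that $\widetilde Y$ is metrizable.

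Here I invoke two facts: a bounded pointwise compact subset of $C(K)$ with $K$ compact is weakly compact in $C(K)$ (Grothendieck's double-limit theorem), so $\widetilde Y$ is an Eberlein compact; and an Eberlein compact satisfying the countable chain condition is metrizable. If $Y$ has the countable chain condition, then $\widetilde Y=\psi(Y)$ is its continuous image, hence also ccc, hence metrizable, and we are done. If instead $X$ has the countable chain condition, I first symmetrize: let $\varphi\colon X\to C_p(\widetilde Y)$, $\varphi(x)(\tilde y)=\tilde y(x)$, put $\widetilde X=\varphi(X)$, and set $\hat f\colon\widetilde X\times\widetilde Y\to[0;1]$, $\hat f(\xi,\tilde y)=\xi(\tilde y)$. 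Then $\widetilde X$ is a continuous image of $X$, hence a compact ccc space, $\varphi$ is continuous, and $\wedge_{\tilde f}=\wedge_{\hat f}\circ\varphi$, $\vee_{\tilde f}=\vee_{\hat f}\circ\varphi$. Since composing with $\varphi$ turns continuous functions on $\widetilde X$ witnessing a stable pair of Hahn into continuous functions on $X$, it suffices to show $(\wedge_{\hat f},\vee_{\hat f})$ is a stable pair of Hahn on $\widetilde X$. Now $\widetilde X\subseteq C_p(\widetilde Y)$ is bounded and compact over the compact space $\widetilde Y$, hence a ccc Eberlein compact, hence metrizable; so $C(\widetilde X)$ is a separable Banach space. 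Finally $\widetilde Y$ embeds into $C_p(\widetilde X)$ (via $\tilde y\mapsto(\xi\mapsto\xi(\tilde y))$, which is continuous, injective, and a homeomorphism onto its image by compactness) as a bounded pointwise compact set, hence is weakly compact in the separable space $C(\widetilde X)$, hence metrizable. In either case $\widetilde Y$ is a metrizable scattered compact, so Proposition~\ref{tv3} and Theorem~\ref{t1} finish the proof.

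The main obstacle is the input from the theory of Eberlein compacta: Grothendieck's theorem and the equivalence ``Eberlein $+$ ccc $\Leftrightarrow$ metrizable''. I expect either to cite these or, more self-containedly, to isolate exactly the consequence used, namely that a scattered compact which either is a ccc continuous image of a compact space and sits inside some $C_p$, or sits inside $C_p(K)$ for a compact ccc $K$, is necessarily countable. The remaining ingredients — the reductions via $\psi$ and $\varphi$, the coincidence of extremal sections under them, and the stability of stable pairs of Hahn under composition with a continuous map — are routine and already modelled on the proof of Theorem~\ref{t4}.
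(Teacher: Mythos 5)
Your argument is correct, and for the case where $Y$ has the countable chain condition it coincides with the paper's: $\widetilde Y=\psi(Y)$ is a ccc Eberlein compact, hence metrizable, hence countable by Proposition~\ref{tv3}, and Theorem~\ref{t1} finishes. Where you genuinely diverge is the case where $X$ is ccc. The paper stops after the first dualization: $\widetilde X=\varphi(X)\subseteq C_p(Y)$ is a ccc Eberlein compact, hence separable by \cite[Proposition 5.2]{MMI}, and Theorem~\ref{t4} (separable $\times$ scattered compact) is applied directly to $\tilde f$ on $\widetilde X\times Y$, the stable pair then being pulled back along $\varphi$. You instead dualize a second time and prove that $\widetilde Y$ itself is metrizable, reducing everything to the countable case of Theorem~\ref{t1}. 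That is sound, but the machinery in your last step (weak compactness of $\widetilde Y$ in the separable Banach space $C(\widetilde X)$ plus metrizability of weakly compact subsets of separable Banach spaces) is heavier than needed: once $\widetilde X$ is metrizable it has a countable dense set $T$, and the restriction map $\tilde y\mapsto\tilde y|_T$ already embeds the compact $\widetilde Y$ homeomorphically into the metrizable space $\mathbb{R}^T$ — which is precisely the device inside the proof of Theorem~\ref{t4}, so your second dualization in effect re-proves that theorem in the case at hand. The external inputs are the same in both treatments (Grothendieck's double-limit criterion to recognize $\widetilde X$, $\widetilde Y$ as Eberlein compacta, and ``ccc Eberlein $\Rightarrow$ metrizable''), and the routine verifications you defer — continuity and injectivity of the evaluation maps, coincidence of the extremal sections, and stability of stable pairs of Hahn under composition with a continuous map — all go through.
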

\begin{proof}  Firstly, let us consider the case where $X$ has the countable chain condition and $f:X\times Y\rightarrow[0;1]$. Define $\varphi:X\rightarrow C_p(Y)$, $\varphi(x)(y)=f(x,y)$ for any $x\in X$, $y\in Y$. Obviously, $f$ is a continuous mapping. Denote $\widetilde{X}=\varphi(X)$. Since   $X$ has the countable chain condition, we have that $\widetilde{X}$ has the countable chain condition as well. But $\widetilde{X}\subseteq C_p(Y)$ and $Y$ is a compact. Therefore,  $\widetilde{X}$  is  an Eberlein compact with the countable chain condition. Thus, $\widetilde{X}$ is a separable space by \cite[Proposition 5.2.]{MMI}

Let $\tilde{f}:\widetilde{X}\times Y\rightarrow\mathbb{\overline{R}}$, $\tilde{f}(\tilde{x},y)=\widetilde{x}(y)$ for any $\tilde{x}\in\widetilde{X}$ and $y\in Y$. Since $\tilde{f}$ is a separately continuous function, Theorem~\ref{t4} yields that the functions $\tilde{g}=\wedge_{\tilde{f}}$, $\tilde{h}=\vee_{\tilde{f}}$ form a stable pair of Hahn. Fix $x\in X$ and let $\tilde{x}=\varphi(x)$. Then
$$g(x)=\wedge_f(x)=\inf_{y\in Y}f(x,y)=\inf_{y\in Y}\varphi(x)(y)$$
$$=\inf_{y\in Y}\tilde{x}(y)=\inf_{y\in Y}\tilde{f}(\tilde{x},y)=\wedge_{\tilde{f}}(\tilde{x})=\tilde{g}(\tilde{x}).$$
Thus, $g=\tilde{g}\circ\varphi$. Analogously, we prove that $h=\tilde{h}\circ\varphi$. It is easy to show that $(g,h)$ is a stable pair of Hahn, because the pair $(\tilde{g},\tilde{h})$ is a stable pair of Hahn and $\varphi$ is  continuous surjection of $X$ onto $\widetilde{X}$.

Now let us consider the case where $Y$ has the countable chain condition and $f:X\times Y\rightarrow[0,1]\subseteq\mathbb{R}$. Define $\psi:Y\rightarrow C_p(X)$, $\psi(y)(x)=f(x,y)$ for any $x\in X$, $y\in Y$. Therefore, $\widetilde{Y}=\psi(Y)$ is an Eberlein compact  with the countable chain condition. Thus, $\widetilde{Y}$ is a metrizable scattered compact  by \cite[Proposition 5.2.]{MMI} and then it is countable by Proposition~\ref{tv3}.

Consider a separately continuous function $\tilde{f}:X\times\widetilde{Y}\rightarrow\mathbb{R}$, $\tilde{f}(x,\tilde{y})=\tilde{y}(x)$ for any $x\in X$ and $\tilde{y}\in\tilde{Y}$. Then it is easy to see that $\wedge_{\tilde{f}}=\wedge_f=g$ and $\vee_{\tilde{f}}=\vee_f=h$. Thus,  Theorem~\ref{t1} implies that $(g,h)=(\tilde g,\tilde h)$  is a  stable pair of Hahn.
\end{proof}

\section{Construction of disjoint sequences in a regular space}\label{secCDSRS}

\begin{lemma} \label{l1} Let $X$  be  a regular  space and $G$  be  an infinity open subset of  $X$. Then there exists a sequence of non-empty open sets $U_{n}\subseteq G$, $n\in\mathbb{N}$, such that $\overline U_n\cap \overline U_m=\emptyset$ for any $n\neq m$.
\end{lemma}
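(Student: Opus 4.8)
The plan is to build the sequence $(U_n)$ by recursion, at each step peeling off one open set while keeping an ``unused'' infinite open reservoir disjoint from all closures produced so far. Concretely, I would construct simultaneously non-empty open sets $U_n\subseteq G$ together with a decreasing sequence of infinite open sets $W_0\supseteq W_1\supseteq\cdots$ with $W_0=G$, such that at each step $\overline{U_n}\subseteq W_{n-1}$ and $W_n=W_{n-1}\setminus\overline{U_n}$. Granting such a construction the lemma is immediate: each $U_n$ is a non-empty open subset of $\overline{U_n}\subseteq W_{n-1}\subseteq W_0=G$, and for $m<n$ we have $\overline{U_n}\subseteq W_{n-1}\subseteq W_m$ while $W_m\cap\overline{U_m}=\emptyset$ by the definition of $W_m$, so $\overline{U_n}\cap\overline{U_m}=\emptyset$. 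Thus the whole problem reduces to a single recursion step.

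The recursion step reads: given an infinite open set $W=W_{n-1}$, produce a non-empty open $U$ with $\overline{U}\subseteq W$ for which $W\setminus\overline{U}$ is still infinite; then put $U_n=U$ and $W_n=W\setminus\overline{U}$. I would split into two cases according to whether $W$ has an isolated point. If $W$ has an isolated point $p$, then, $W$ being open, $\{p\}$ is open in $X$ and, since regular spaces are $T_1$, also closed; taking $U=\{p\}$ gives $\overline{U}=\{p\}\subseteq W$ and $W\setminus\overline{U}=W\setminus\{p\}$, still infinite because $W$ is. If $W$ has no isolated point, then every non-empty open subset of $W$ is again without isolated points, hence infinite (in a $T_1$ space a non-empty open set without isolated points is infinite); pick two distinct points $x,y\in W$, use regularity (with $\{y\}$ closed) to separate them by disjoint open sets $O_x\ni x$ and $O_y\ni y$, and use regularity once more to find an open $U$ with $x\in U\subseteq\overline{U}\subseteq O_x\cap W$. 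Then $\overline{U}\subseteq W$, and $W\setminus\overline{U}\supseteq O_y\cap W$, which is a non-empty open subset of $W$ and therefore infinite.

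The only delicate point — the only place where ``infinity'' could be lost — is keeping $W_n$ infinite after deleting $\overline{U_n}$, and the case split is designed exactly for this: chopping at an isolated point removes just one point, whereas chopping near a non-isolated point is carried out inside a separating neighbourhood $O_x$ so that the complementary open piece $O_y\cap W$, which has no isolated points and hence is infinite, survives untouched. I would record at the outset the two elementary facts used throughout: for an open set $G$, the isolated points of $G$ are precisely the isolated points of $X$ lying in $G$; and, in a $T_1$ space, open subspaces of a space without isolated points again have no isolated points, while a non-empty open set without isolated points is infinite. With these in hand the induction runs with no further obstacle, and the resulting family $(U_n)_{n\in\mathbb N}$ is the required sequence.
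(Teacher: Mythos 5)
Your proof is correct; the difference from the paper's lies in how each argument guarantees that the recursion never runs out of material. The paper makes one global case distinction: if every point of $G$ is isolated it simply takes singletons $U_n=\{x_n\}$ for distinct $x_n\in G$; otherwise it fixes a non-isolated point $x_0\in G$ once and for all and uses it as an anchor, so that $W_n=G\setminus\bigcup_{k<n}\overline{U_k}$ is always a neighbourhood of $x_0$, from which a fresh point $x_n\ne x_0$ can be extracted and shrunk by regularity to $\overline{U_n}\subseteq W_n\setminus\{x_0\}$ --- the anchor is never consumed, and infinitude of $G$ is not tracked at all in that branch. You instead carry ``$W_n$ is an infinite open set'' as the inductive invariant and redo the isolated/non-isolated dichotomy afresh at every step, paying for it with the auxiliary separation of two points in the perfect case so that the reservoir $O_y\cap W$ survives intact. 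Both arguments use exactly the same tools (regularity to push a closure inside a prescribed open set, and closedness of singletons, which the paper also needs for $W_1\setminus\{x_0\}$ to be open and for $\overline{\{x_n\}}=\{x_n\}$ in the discrete case), and both are complete. The paper's anchor-point device is a little leaner; your invariant is the more self-explanatory one and makes the only delicate point --- not losing infinitude when a closure is deleted --- fully explicit.
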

\begin{proof}
If every point of $G$ is isolated, then we pick a sequence of distinct points $x_{n}\in G$ and put $U_{n}=\{x_{n}\}$.

Let us consider the case where there exists a non-isolated point  $x_{0}$ of $G$. Therefore, $W\setminus \{x_{0}\}\neq\emptyset$ for any neighborhood $W$ of $x_{0}$. Set $W_{1}=G$. Then there exists a point $x_{1}\in W_{1}\setminus \{x_{0}\}$. Since $X$ is a regular space and $V_{1}=W_{1}\setminus \{x_{0}\}$ is a neighborhood of $x_{1}$, there exists an open neighborhood  $U_{1}$ of $x_{1}$ with $\overline U_{1}\subseteq V_{1}$. Put $W_{2}=G\setminus \overline U_{1}$.

%Clearly, $W_{2}$ is a neighborhood of $x_{0}$. Then there exists $x_{2}\in W_{2}\setminus \{x_{0}\}$. Thus, $V_{2}=W_{2}\setminus \{x_{0}\}$ is a neighborhood of $x_{2}$ in $X$. Since $X$ regular, there exists an open neighborhood $U_{2}$ of $x_{2}$ in $X$ such that $\overline{U_{2}}\subseteq V_{2}\subseteq W_2=G\setminus \overline U_1$. Then $\overline U_{1}\cap \overline U_{2}=\emptyset$ and $x_{0}\notin \overline U_{1}\cup \overline U_{2}$.

Suppose that for some $n>1$ we have already constructed point $x_{k}\in X$ and its open neighborhood $U_{k}\subseteq G$ in $X$ for any $k<n$ such that
\begin{itemize}
\item[$(1)$] $\overline U_{k}\cap \overline U_{j}=\emptyset$ for all $k,j<n$ such that $k\neq j$;
\item[$(2)$] $x_{0}\notin \bigcup\limits_{k<n}\overline U_{k}$.
\end{itemize}
Let us construct a point $x_{n}$ and its open neighborhood $U_{n}$ with the same properties. By $(2)$ we have that $W_{n}=G\setminus\bigcup\limits_{k<n}\overline U_{k}$ is a neighborhood of  $x_{0}$. Then there exists a point $x_{n}\in W_{n}\setminus \{x_{0}\}$. Therefore, $V_{n}=W_{n}\setminus \{x_{0}\}$ is a neighborhood of $x_{0}$ in $X$. Thus, the regularity of $X$ implies that there is an open neighborhood $U_{n}$ of $x_{n}$ such that $\overline U_{n}\subseteq V_{n}$. Therefore,
$$\overline U_{k}\cap \overline U_{j}=\emptyset\ \ \text{for all } k,j<n
\ \ \ \text{and}\ \ \
x_{0}\notin \bigcup\limits_{k\leq n}\overline U_{k}.$$
So, we have constructed the needed sequence.
\end{proof}

\begin{lemma}\label{l2} Let $X$  be  an infinity regular space. Then there exists a disjoint sequence $(G_n)_{n=1}^\infty$ of infinity open sets $G_{n}$ in $X$.
\end{lemma}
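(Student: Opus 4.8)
The plan is to apply Lemma~\ref{l1} to the infinite open subset $G=X$ of $X$ itself and then to repair the output: Lemma~\ref{l1} produces a pairwise ``closure-disjoint'' sequence of open sets, but those sets need not be infinite, and overcoming that is the whole point of the argument. So first I would use Lemma~\ref{l1} to obtain non-empty open sets $U_n\subseteq X$, $n\in\mathbb N$, with $\overline U_n\cap\overline U_m=\emptyset$ for $n\neq m$; in particular the $U_n$ are pairwise disjoint. I would then split into two cases according to how many of the $U_n$ are infinite.

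If the set $\{n:U_n\text{ is infinite}\}$ is infinite, I would enumerate it as $n_1<n_2<\cdots$ and put $G_k=U_{n_k}$; this is already a pairwise disjoint sequence of infinite open sets, so we are done.

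Otherwise there is $N\in\mathbb N$ with $U_n$ finite for every $n>N$. Here I would use that $X$ is $T_1$ (as is tacitly assumed throughout, e.g.\ in the proof of Lemma~\ref{l1}, where $W_1\setminus\{x_0\}$ is taken to be open): in a $T_1$ space every non-empty \emph{finite} open set consists of isolated points, since for $x\in U_n$ the set $U_n\setminus\{x\}$ is finite, hence closed, so $\{x\}=U_n\setminus(U_n\setminus\{x\})$ is open in $X$. Choosing one point $z_n\in U_n$ for each $n>N$ thus yields an infinite set $Z=\{z_n:n>N\}$ (the $z_n$ are pairwise distinct because the $U_n$ are disjoint) all of whose points are isolated in $X$. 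Consequently $Z=\bigcup_{n>N}\{z_n\}$ is open in $X$ and discrete in the subspace topology, so every subset of $Z$ is open in $X$. It then remains to invoke the standard set-theoretic fact that the infinite set $Z$ can be partitioned into countably many pairwise disjoint infinite subsets $A_1,A_2,\dots$, and to set $G_k=A_k$: each $G_k$ is an infinite open subset of $X$ and the $G_k$ are pairwise disjoint.

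The only genuine obstacle is this second case: Lemma~\ref{l1} by itself may return only finite open sets (think of a space consisting of a dense set of isolated points together with a single limit point), so one really has to notice that finite open sets are built out of isolated points and then reassemble those isolated points into infinite open pieces. An alternative, essentially equivalent route is to split instead on whether the set $D$ of isolated points of $X$ is infinite or finite: if $D$ is infinite, partition $D$ directly into countably many infinite subsets, each of which is open in $X$; if $D$ is finite, then (since finite sets are closed) $X\setminus D$ is an infinite open subset of $X$ with no isolated points, and Lemma~\ref{l1} applied to $X\setminus D$ then automatically returns infinite open sets, because a non-empty open subset of a crowded space cannot be finite.
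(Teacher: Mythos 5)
Your proof is correct, but it takes a noticeably more laborious route than the paper's. The paper also starts from Lemma~\ref{l1}, but then simply partitions $\mathbb{N}$ into infinitely many pairwise disjoint \emph{infinite} sets $N_k$ and puts $G_k=\bigcup_{n\in N_k}U_n$: each $G_k$ is open, the $G_k$ are pairwise disjoint because the $U_n$ are, and each $G_k$ is automatically infinite because it contains at least one point from each of the infinitely many pairwise disjoint non-empty sets $U_n$, $n\in N_k$. This one-line grouping trick completely dissolves the ``obstacle'' you identify (that the individual $U_n$ may be finite), so no case distinction is needed and no separation axiom is used beyond what Lemma~\ref{l1} already consumed. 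Your argument, by contrast, genuinely needs the $T_1$ axiom in the second case (to conclude that a finite open set consists of isolated points); that hypothesis is indeed tacit in the paper's notion of regularity, so your proof is valid, and as a by-product it gives a sharper structural dichotomy (either infinitely many of the $U_n$ are already infinite, or $X$ contains an infinite open discrete subspace), but for the statement as such the paper's union-over-infinite-index-sets argument is both shorter and marginally more general.
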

\begin{proof} Choose $(U_{n})_{n=1}^{\infty}$ by lemma~\ref{l1}. Consider a disjoint partition of $\mathbb{N}$ into infinity sets $N_{k}\subseteq\mathbb{N}$ and put $G_{k}=\bigcup\limits_{n\in N_{k}}U_{n}$.
\end{proof}

\section{Construction of special functions in a regular space}

We start with the proof of the continuity of a series of continuous functions with disjoint supports. The \emph{support} of a function $f:X\rightarrow\mathbb{R}$ is the set
$\mathrm{supp} f=\big\{x\in X:f(x)\neq0\big\}.$

\begin{lemma}\label{l3} Let $X$  be  a topological space, $(U_{n})_{n=1}^{\infty}$  be a sequence of non-empty open sets in $X$ such that $\overline U_{n}\cap\overline U_{m}=\emptyset$ for any $n\neq m$ and $f_{n}:X\rightarrow\mathbb{R}$ be  continuous functions such that $\mathrm{supp} f_{n}\subseteq U_{n}$ and $\varepsilon_{n}=\sup\limits_{x\in U_{n}}|f_{n}(x)|\to 0$ as $n\rightarrow\infty$. Then the function $f=\sum\limits_{n=1}^{\infty} f_{n}$ is continuous on $X$.
\end{lemma}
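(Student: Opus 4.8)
The plan is to show continuity at an arbitrary point $x_0 \in X$ by splitting into two cases according to whether $x_0$ lies in one of the closures $\overline{U_n}$ or not.

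First suppose $x_0 \in \overline{U_{n_0}}$ for some (necessarily unique, by disjointness of the closures) index $n_0$. Then $V = X \setminus \bigcup_{n \neq n_0} \overline{U_n}$ is a neighborhood of $x_0$: indeed, I would first check that $\bigcup_{n \neq n_0} \overline{U_n}$ is closed. This is the one genuinely nontrivial point — a countable union of closed sets need not be closed in general. But here the sets $\overline{U_n}$ have pairwise disjoint closures, and more importantly each $f_n$ is continuous with $\mathrm{supp}\, f_n \subseteq U_n$, so I would instead argue directly: on a neighborhood of $x_0$ missing all $\overline{U_n}$ with $n \neq n_0$ we have $f = f_{n_0}$, which is continuous there. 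To produce such a neighborhood, note that since $x_0 \notin \overline{U_n}$ for $n \neq n_0$, for the purpose of continuity at $x_0$ it is enough to find, for each $\delta > 0$, a neighborhood on which $|f - f(x_0)| < \delta$; I would pick $N$ with $\varepsilon_n < \delta/3$ for $n > N$, then $W = X \setminus \bigcup_{n \le N,\, n \neq n_0} \overline{U_n}$ is an \emph{open} neighborhood of $x_0$ (finite union of closed sets), on which $f = f_{n_0} + \sum_{n > N} f_n$, and on $W$ the tail satisfies $\big|\sum_{n>N} f_n\big| \le \sup_{n>N}\varepsilon_n \le \delta/3$ pointwise (the supports are disjoint, so at each point at most one summand is nonzero). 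Shrinking $W$ further using continuity of $f_{n_0}$ at $x_0$ to control $|f_{n_0}(x) - f_{n_0}(x_0)| < \delta/3$ gives $|f(x) - f(x_0)| < \delta$ on the resulting neighborhood.

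Now suppose $x_0 \notin \overline{U_n}$ for every $n$. Then $f_n(x_0) = 0$ for all $n$, so $f(x_0) = 0$. Given $\delta > 0$, choose $N$ with $\varepsilon_n < \delta/2$ for $n > N$. The set $W = X \setminus \bigcup_{n \le N} \overline{U_n}$ is an open neighborhood of $x_0$ (again a finite union of closed sets), and for $x \in W$ we have $f_n(x) = 0$ for $n \le N$, hence $f(x) = \sum_{n > N} f_n(x)$, and by disjointness of supports at most one term is nonzero at $x$, so $|f(x)| \le \sup_{n > N} \varepsilon_n \le \delta/2 < \delta$. Thus $f$ is continuous at $x_0$.

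I expect the main obstacle, such as it is, to be the bookkeeping in the first case: one must resist the temptation to assert that $\bigcup_n \overline{U_n}$ is closed, and instead exploit that for an $\varepsilon$-estimate only \emph{finitely many} of the closed sets $\overline{U_n}$ are relevant (the ones with $\varepsilon_n$ not yet small), so only a finite union needs to be removed to get an open neighborhood. The disjointness of the supports is what makes the tail sum $\sum_{n>N} f_n$ bounded by $\sup_{n>N}\varepsilon_n$ rather than $\sum_{n>N}\varepsilon_n$, but in fact either bound would suffice once we also know $\sum \varepsilon_n$ converges — however we are only given $\varepsilon_n \to 0$, so the pointwise-at-most-one-nonzero-term observation is essential here.
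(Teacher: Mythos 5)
Your proposal is correct and follows essentially the same route as the paper: remove only the finitely many closures $\overline{U_k}$ with $\varepsilon_k$ not yet small to get an open neighborhood, use continuity of the single relevant $f_{n}$ at $x_0$, and bound the tail by one term via disjointness of supports. The paper merely merges your two cases by observing that disjointness always yields an index $n$ with $x_0\notin\overline{U_k}$ for all $k\neq n$ (choosing the cutoff $m>n$ so $f_n$ is never double-counted), which is a cosmetic difference.
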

\begin{proof}
Let us show the continuity of $f$ at a point $x_{0}\in X$.
Fix $\varepsilon>0$. The disjointness of $\big(\overline U_{n}\big)_{n=1}^{\infty}$ implies that there is $n\in\mathbb{N}$ such that $x_{0}\notin\overline U_{k}$ for any $k\neq n$.
Since $\varepsilon_{k}\rightarrow 0$, there is $m>n$ such that $\varepsilon_{k}<\tfrac{\varepsilon}{3}$ for each $k\geq m$.
Then
$V=X\setminus \bigcup\limits_{k< m,k\neq n}\overline U_k$
is an open neighborhood of $x_{0}$. But $f_{n}$ is continuous at $x_{0}$. Then there exists a neighborhood $U$  of $x_{0}$ such that $U\subseteq V$ and $|f_{n}(x)-f_{n}(x_{0})|<\tfrac{\varepsilon}{3}$ for any $x\in U$. Since $U_{k}\cap U=\emptyset$ for each $k<m$ and $k\neq n$, we have that  for any $x\in U$
$$f(x)=f_{n}(x)+\sum\limits_{k\geq m}f_{k}(x).$$
The disjointness  of the supports $\mathrm{supp} f_{k}\subseteq U_{k}$ yields that the sum $\sum\limits_{k\geq m} f_{k}(x)$ has at most one nonzero summand. Then there exists $k_{x}\geq m$ such that
$$f(x)=f_{n}(x)+f_{k_{x}}(x).$$
Put $k=k_{x}$ and $l=k_{x_{0}}$. So, $k,l\geq m$ and
$$f(x)=f_{n}(x)+f_{k}(x)\text{ and }f(x_{0})=f_{n}(x_{0})+f_{l}(x_{0}).$$
Therefore,
$$|f(x)-f(x_{0})|\leq|f_{n}(x)-f_{n}(x_{0})|+|f_{k}(x)|+|f_{l}(x_0)|<\tfrac{\varepsilon}{3}+\tfrac{\varepsilon}{3}+\tfrac{\varepsilon}{3}=\varepsilon$$
Thus, $f$ is continuous at $x_{0}$.
\end{proof}

Now we pass to the construction of special functions in completely regular spaces.

\begin{lemma}\label{l4} Let $X$  be  a completely regular space and $G$ be an infinite open subset of $X$. Then there exists a sequence $(x_{n})_{n=1}^{\infty}$ in $G$ and a continuous function $f:X\rightarrow [-1,1]$ such that $\mathrm{supp} f\subseteq G$, $f(x_{2k-1})=\tfrac{1}{k}$ and ${f(x_{2k})=-\tfrac{1}{k}}$ for any $k\in\mathbb{N}$.
\end{lemma}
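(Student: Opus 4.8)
The plan is to combine Lemma~\ref{l1} with a countable partition of $\mathbb{N}$, so that we produce disjoint closed-separated open pieces of $G$ on which we can install little bumps of heights $\pm\tfrac1k$. First I would invoke Lemma~\ref{l1} (applicable since $X$ is completely regular, hence regular, and $G$ is infinite) to obtain non-empty open sets $U_n\subseteq G$ with $\overline{U_n}\cap\overline{U_m}=\emptyset$ for $n\neq m$. Pick a point $x_n\in U_n$ for each $n$; these are automatically distinct since the $\overline{U_n}$ are pairwise disjoint. By complete regularity, for each $n$ choose a continuous $\varphi_n\colon X\to[0,1]$ with $\varphi_n(x_n)=1$ and $\varphi_n$ vanishing outside $U_n$ (first get a function separating $x_n$ from the closed set $X\setminus U_n$, then note $\{\varphi_n\neq 0\}\subseteq U_n$).

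Next I would set the heights: define the sign-and-amplitude by taking, for each $k\in\mathbb{N}$, the index $2k-1$ to carry amplitude $+\tfrac1k$ and the index $2k$ to carry amplitude $-\tfrac1k$. Concretely put $c_{2k-1}=\tfrac1k$, $c_{2k}=-\tfrac1k$, and let $f_n=c_n\varphi_n$, so $\operatorname{supp}f_n\subseteq U_n$, $f_n$ is continuous with values in $[-1,1]$, and $f_n(x_n)=c_n$. Then define
$$
f=\sum_{n=1}^{\infty}f_n.
$$
Since $\varepsilon_n=\sup_{x\in U_n}|f_n(x)|\le|c_n|\to 0$ as $n\to\infty$ and the closures of the $U_n$ are pairwise disjoint, Lemma~\ref{l3} applies and gives that $f$ is continuous on $X$. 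The disjointness of the supports also shows $\operatorname{supp}f\subseteq\bigcup_n U_n\subseteq G$ and that $|f(x)|\le\max_n|c_n|=1$ everywhere, so $f\colon X\to[-1,1]$. Finally, for any $k$ the point $x_{2k-1}$ lies in $U_{2k-1}$ and in no other $\overline{U_n}$, hence $f(x_{2k-1})=f_{2k-1}(x_{2k-1})=c_{2k-1}=\tfrac1k$, and similarly $f(x_{2k})=c_{2k}=-\tfrac1k$. Relabelling $(x_n)$ as our chosen sequence in $G$ completes the construction.

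I do not anticipate a genuine obstacle here: the statement is essentially a packaging of Lemmas~\ref{l1} and~\ref{l3}. The only point requiring minor care is extracting, from complete regularity, bump functions $\varphi_n$ whose \emph{support} (not merely cozero set) sits inside $U_n$ — this is immediate because $\{\varphi_n\neq 0\}\subseteq\{\varphi_n\neq 0\text{ only where }x_n\text{ was separated from }X\setminus U_n\}\subseteq U_n$ once $\varphi_n$ is chosen to vanish identically on $X\setminus U_n$ — and checking that the hypothesis $\varepsilon_n\to 0$ of Lemma~\ref{l3} is met, which holds since $|c_n|$ runs through $1,1,\tfrac12,\tfrac12,\tfrac13,\tfrac13,\dots\to 0$.
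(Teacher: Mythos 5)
Your proposal is correct and follows essentially the same route as the paper's proof: invoke Lemma~\ref{l1} to get the sets $U_n$ with disjoint closures, pick $x_n\in U_n$, use complete regularity to produce bump functions supported in $U_n$, form the weighted sum with coefficients $\pm\tfrac1k$, and apply Lemma~\ref{l3} for continuity. The only cosmetic difference is that your worry about ``support versus cozero set'' is moot here, since the paper defines $\mathrm{supp}\,f$ as the set $\{x: f(x)\neq 0\}$ itself rather than its closure.
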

\begin{proof} By Lemma~\ref{l1} there is  a sequence of non-empty open sets $U_{n}\subseteq G$ such that $\overline U_{n}\cap \overline U_{m}=\emptyset$ if $n\neq m$. For any $n\in\mathbb{N}$ we pick a point  $x_{n}\in U_{n}$. Since $X$ is a completely regular space, for any $n\in\mathbb{N}$ there is a continuous function $f_{n}:X\rightarrow [0,1]$ such that $f_{n}(x_{n})=1$ and $\mathrm{supp} f_{n}\subseteq U_{n}$. Put
$$f(x)=\sum\limits_{k=1}^{\infty}\tfrac{1}{k}f_{2k-1}(x)-\sum\limits_{k=1}^{\infty}\tfrac{1}{k}f_{2k}(x).$$
For any $n\in \mathbb N$ there are $k_n\in\mathbb N$ and $i_n\in\{0,1\}$ such that ${n=2k_n-i_n}$. Therefore,  
$$f(x)=\sum\limits_{n=1}^{\infty}\tfrac{(-1)^{i_n}}{k_n}f_{n}(x),\ \ \ x\in X.$$
So, $f(x_n)=\tfrac{(-1)^{i_n}}{k_n}$ and then 
${\varepsilon_n=\sup\limits_{x\in U_{n}}|f(x)|=\tfrac{1}{k_n}\rightarrow 0}$. Thus, by Lemma~\ref{l3} the function $f$ is continuous on $X$ and $$\mathrm{supp} f\subseteq\bigcup\limits_{n=1}^{\infty}
\mathrm{supp} f_{n}\subseteq\bigcup\limits_{n=1}^{\infty}U_{n}\subseteq G.$$
Moreover,  $f(x_{2k-1})=\tfrac{1}{k}$ and $f(x_{2k})=-\tfrac{1}{k}$ for any $k$.
\end{proof}

\section{Construction of functions related with pairs of Hahn}

\begin{lemma}\label{l5} Let $X$  be  a topological space, $Y$  be a completely regular topological space, $G$ be an infinite open subset of $Y$, $g,h:X\rightarrow\mathbb{R}$ be continuous functions such that $g\leq 0\leq h$ and $A$ be a functionally closed subset of $X$. Then there exists a separately continuous function $f:X\times Y\rightarrow\mathbb{R}$ such that $\mathrm{supp} f\subseteq(X\setminus A)\times G$ and
$$
\min\limits_{y\in G}f(x,y)=g(x)\text{ and }\max\limits_{y\in G}f(x,y)=h(x)\text{ for any }x\in X\setminus A.
$$
\end{lemma}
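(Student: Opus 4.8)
The plan is to build $f$ as a sum of bump functions supported on a disjoint family of open subsets of $G$, with $x$-dependent coefficients assembled from $g$, $h$, a continuous function $\alpha\colon X\to[0,1]$ with $\alpha^{-1}(0)=A$, and a sequence of auxiliary functions $s_n\colon[0,1]\to[0,1]$ forming a kind of moving partition of $[0,1]$.

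First I would fix the ingredients. Since $A$ is functionally closed, choose a continuous $\alpha\colon X\to[0,1]$ with $A=\alpha^{-1}(0)$. A completely regular space is regular, so Lemma~\ref{l1} applied to the infinite open set $G\subseteq Y$ yields non-empty open sets $U_n\subseteq G$ ($n\in\mathbb N$) with $\overline U_n\cap\overline U_m=\emptyset$ for $n\ne m$; I pick $y_n\in U_n$ and, by complete regularity of $Y$, continuous $\psi_n\colon Y\to[0,1]$ with $\psi_n(y_n)=1$ and $\mathrm{supp}\,\psi_n\subseteq U_n$. Then I would fix continuous $s_n\colon[0,1]\to[0,1]$ with $s_n(0)=0$, such that for every $t\in(0,1]$ there is $n$ with $s_n(t)=1$, and such that $s_n(t)\to0$ for every fixed $t\in[0,1]$; for instance one may take $s_n\equiv1$ on $[\tfrac1{n+1},\tfrac1n]$, $s_n\equiv0$ off $(\tfrac1{n+2},\tfrac1{n-1})$, and $s_n$ affine in between (with the obvious modification for $n=1$), because $(0,1]=\bigcup_n[\tfrac1{n+1},\tfrac1n]$ and each fixed $t>0$ leaves $(\tfrac1{n+2},\tfrac1{n-1})$ for all large $n$.

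Next I would define
$$
f(x,y)=\sum_{n=1}^{\infty}s_n(\alpha(x))\Big(h(x)\,\psi_{2n-1}(y)+g(x)\,\psi_{2n}(y)\Big),\qquad x\in X,\ y\in Y,
$$
and verify its properties. Since the $U_m$ are pairwise disjoint, for each fixed $y$ at most one $\psi_m$ is non-zero at $y$, so the series collapses to a single summand (or to $0$); hence every $f_y$ is continuous on $X$ (it is $0$ or a product of continuous functions). For fixed $x$: if $x\in A$ then $\alpha(x)=0$, so all $s_n(\alpha(x))=0$ and $f^x\equiv0$; if $x\notin A$, I write $f^x=\sum_m F_m$ with $F_{2n-1}=h(x)s_n(\alpha(x))\psi_{2n-1}$ and $F_{2n}=g(x)s_n(\alpha(x))\psi_{2n}$, note that $\mathrm{supp}\,F_m\subseteq\mathrm{supp}\,\psi_m\subseteq U_m$ and that $\sup_Y|F_{2n-1}|,\sup_Y|F_{2n}|\le\max\{|g(x)|,|h(x)|\}\,s_n(\alpha(x))\to0$, and invoke Lemma~\ref{l3} to conclude that $f^x$ is continuous on $Y$. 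So $f$ is separately continuous. Also $f(x,y)=0$ when $x\in A$ (just noted) or $y\notin G$ (then $\psi_m(y)=0$ for all $m$, as $\mathrm{supp}\,\psi_m\subseteq U_m\subseteq G$), whence $\mathrm{supp}\,f\subseteq(X\setminus A)\times G$. Finally, for $x\in X\setminus A$ I have $\alpha(x)\in(0,1]$; at any $y$ the value $f(x,y)$ is $0$ or a single term $h(x)s_n(\alpha(x))\psi_{2n-1}(y)$ or $g(x)s_n(\alpha(x))\psi_{2n}(y)$, so $g(x)\le f(x,y)\le h(x)$ (using $g\le0\le h$ and $s_n,\psi_m\in[0,1]$); and choosing $n$ with $s_n(\alpha(x))=1$ gives $f(x,y_{2n-1})=h(x)$ and $f(x,y_{2n})=g(x)$ with $y_{2n-1},y_{2n}\in G$. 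Hence $\max_{y\in G}f(x,y)=h(x)$ and $\min_{y\in G}f(x,y)=g(x)$, as required.

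The delicate point — and the reason for the moving bumps $s_n$ — is to reconcile the \emph{exact} equality $\max_{y\in G}f(x,y)=h(x)$ for every $x\in X\setminus A$ with the two competing requirements that $f$ vanish on $A\times Y$ and that each section $f^x$ be continuous. Continuity of $f^x$ forces the spike heights $h(x)s_n(\alpha(x))$ to tend to $0$, so the maximum over $y$ must already be attained at a finite index $n$; yet this index has to be selected in an $x$-dependent way so as not to destroy the vanishing on $A$ — which is exactly what the covering $(0,1]=\bigcup_n[\tfrac1{n+1},\tfrac1n]$, together with $s_n(0)=0$ and $s_n(t)\to0$, arranges.
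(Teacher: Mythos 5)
Your proof is correct. It follows the same overall strategy as the paper's --- spikes of height $h(x)$ and depth $g(x)$ placed at points $y_{2n-1},y_{2n}$ lying in a disjoint family of open subsets of $G$ obtained from Lemma~\ref{l1}, with the index $n$ of the full-height spike selected by the position of $\alpha(x)$ in $[\tfrac1{n+1};\tfrac1n]$, and with spike heights decaying so that the $y$-sections stay continuous --- but the technical core is genuinely different. The paper packages the index selection into the Schwartz function: via Lemma~\ref{l4} it builds a single continuous $\beta\colon Y\to[-1;1]$ with $\mathrm{supp}\,\beta\subseteq G$, $\beta(y_{2n-1})=\tfrac1n$, $\beta(y_{2n})=-\tfrac1n$, and sets $f(x,y)$ equal to $g(x)\,\varphi\big(\alpha(x),\beta(y)\big)$ or $h(x)\,\varphi\big(\alpha(x),\beta(y)\big)$ according to the sign of $\beta(y)$, where $\varphi$ is the separately continuous truncation of $2|\mathrm{sp}|$; the key computation is that $\varphi\big(\alpha(x),\pm\tfrac1n\big)=1$ whenever $\tfrac1{n+1}\le\alpha(x)\le\tfrac1n$, and separate continuity of $f$ is inherited from that of $\varphi$. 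You instead use an explicit family of ``moving bump'' coefficients $s_n(\alpha(x))$ and the direct series $\sum_n s_n(\alpha(x))\big(h(x)\psi_{2n-1}(y)+g(x)\psi_{2n}(y)\big)$, which avoids the Schwartz function entirely at the cost of invoking Lemma~\ref{l3} directly for each section $f^x$ (the hypotheses are satisfied since $\sup_Y|F_m|$ is dominated by $\max\{|g(x)|,|h(x)|\}\,s_{\lceil m/2\rceil}(\alpha(x))\to0$). All the checks --- separate continuity, vanishing on $A\times Y$ and on $X\times(Y\setminus G)$, the two-sided bound $g(x)\le f(x,y)\le h(x)$, and attainment of both extremes at $y_{2n-1},y_{2n}\in G$ --- go through. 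Your version is more elementary and self-contained; the paper's is more compressed once the $\mathrm{sp}$-machinery from Lemmas~\ref{l4} and~\ref{l3} is in place.
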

\begin{proof} Since $A$ is a functionally closed set, there exists a continuous function $\alpha:X\rightarrow[0;1]$ such that $A=\alpha^{-1}(0)$.
Using Lemma~\ref{l4} for the space $Y$ and the set $G$ we find a sequence $(y_{n})_{n=1}^{\infty}$ in $G$ and a continuous function $\beta:Y\rightarrow[0;1]$ such that $\mathrm{supp} \beta\subseteq G$, $\beta(y_{2n-1})=\tfrac{1}{n}$ and $\beta(y_{2n})=-\tfrac{1}{n}$ for any $n\in\mathbb{N}$.

Recall  that \emph{the Schwartz function} $\mathrm{sp}:\mathbb{R}^{2}\rightarrow\mathbb{R}$ is defined by
$$\mathrm{sp}(s,t)=\left\{
            \begin{array}{cl}
              \tfrac{2st}{s^2+t^2},& \text{ if }(s,t)\neq(0,0); \\
              0,& \text{ if } (s,t)=(0,0);  \\
            \end{array}
          \right.\ \ \ \text{for any } s,t\in\mathbb R
$$
It is wellknown, $\mathrm{sp}$ is a separately continuous function. Put
$$
\varphi(s,t)=
\left\{
  \begin{array}{cl}
    1, & \hbox{if }|\mathrm{sp}(s,t)|\ge\frac12;  \\
    2|\mathrm{sp}(s,t)|, & \hbox{if } |\mathrm{sp}(s,t)|<\frac12;\\
    %-1, & \hbox{if }\mathrm{sp}(s,t)<-\frac12;
  \end{array}
\right.\ \ \ \text{for any }s,t\in\mathbb R.
$$
Obviously, $\varphi:\mathbb{R}^{2}\rightarrow[0;1]$ is a separately continuous function.
Define a function $f:X\times Y\rightarrow\mathbb{R}$ by
$$f(x,y)=\left\{
            \begin{array}{cr}
              g(x)\varphi\big(\alpha(x),\beta(y)\big),&\hbox{if } \beta(y)<0; \\
              h(x)\varphi\big(\alpha(x),\beta(y)\big),&\hbox{if } \beta(y)\geq0;  \\
            \end{array}
          \right.\ \ \ \text{for any }(x,y)\in X\times Y.
$$
Observe that $\beta(y)=0$ implies
$g(x)\varphi\big(\alpha(x),\beta(y)\big)=0=h(x)\varphi\big(\alpha(x),\beta(y)\big)$ for any $x\in X$ and $y\in Y$. Therefore,
 $f$ is separately continuous.

Let us prove that  $\mathrm{supp} f\subseteq(X\setminus A)\times G$.
If $x\in A$, then $\alpha(x)=0$. Therefore, for any $t\in\mathbb{R}$ 
$$
\varphi\big(\alpha(x),t\big)=2\mathrm{sp}\big(\alpha(x),t\big)=2\mathrm{sp}(0,t)=0.
$$Thus, $f(x,y)=0$ for any $x\in A$.   On the other hand, if $y\notin G$ then  $\beta(y)=0$, because $\mathrm{supp} \beta\subseteq G$. But for any $s\in\mathbb{R}$ 
$$
\varphi\big(s,\beta(y)\big)=2\mathrm{sp}\big(s,\beta(y)\big)=2\mathrm{sp}(s,0)=0.
$$ 
Hence  $f(x,y)=0$ for any
$y\notin G$.
%Thus, $\mathrm{supp} f\subseteq (X\setminus A)\times G$.

Fix $x\in X\setminus A$ and show that
$\min\limits_{y\in G}f(x,y)=g(x)$ and $\max\limits_{y\in G}f(x,y)=h(x)$.
Since $A=\alpha^{-1}(0)$, $\alpha(x)\in(0,1]$. Then  $\tfrac{1}{n+1}\le\alpha(x)\leq\tfrac{1}{n}$ for some $n\in\mathbb{N}$. Therefore,
$$\mathrm{sp}\big(\alpha(x),\tfrac{1}{n}\big)=\frac{2\alpha(x)\tfrac{1}{n}}{\alpha(x)^{2}+\tfrac{1}{n^{2}}}
\geq\frac{2\cdot\tfrac{1}{n+1}\cdot\tfrac{1}{n}}{\tfrac{1}{n^{2}}+\tfrac{1}{n^{2}}}=
\frac{2\cdot\tfrac{1}{n+1}\cdot\tfrac{1}{n}}{2\cdot\tfrac{1}{n^{2}}}=\frac{n}{n+1}\geq\frac{1}{2}.$$
Thus, $\big|\mathrm{sp}\big(\alpha(x),\pm\tfrac{1}{n}\big)\big|=\big|\pm \mathrm{sp}\big(\alpha(x),\tfrac{1}{n}\big)\big|\ge\frac12$. So,
$\varphi\big(\alpha(x),\pm\frac1n\big)=1$.
Since $\beta(y_{2n-1})=\tfrac{1}{n}>0$ and $\beta(y_{2n})=-\frac1n<0$, we have that
$$f(x,y_{2n-1})=h(x)\ \ \ \text{ and }\ \ \ f(x,y_{2n})=g(x).$$
But $0\le\varphi(s,t)\leq 1$ for any $s,t\in\mathbb{R}$.  Then $g(x)\le f(x,y)\le h(x)$ for each $y\in G$. Since $y_{2n},y_{2n-1}\in G$, we obtain
$$\max\limits_{y\in G}f(x,y)=h(x)\ \ \ \text{ and }\ \ \ \min\limits_{y\in G}f(x,y)=g(x).$$
for any $x\in X\setminus A$.\end{proof}

\section{Construction of separately continuous functions by a given stable pair of Hahn}\label{secConstrWithGiwenPH}

\begin{theorem}\label{t3} Let  $X$  be  a topological space, $Y$  be  an infinite completely regular topological space and $(g,h)$  be  a stable pair of Hahn on $X$. Then there exists a separately continuous function  $f:X\times Y\rightarrow\overline{\mathbb{R}}$ such that 
\begin{equation}\label{equ:(0)}
	\min\limits_{y\in Y}f(x,y)=g(x)\ \ \ \text{and}\ \ \ \max\limits_{y\in Y}f(x,y)=h(x)\ \ \ \text{for any}\  x\in X.
\end{equation}
\end{theorem}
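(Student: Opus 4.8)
The plan is to build $f$ by adding to a fixed continuous section a series of corrections, each correction living on its own open slice of $Y$. First I would pass to the bounded case: composing $(g,h)$ and a witnessing sequence of continuous functions with an increasing homeomorphism $\overline{\mathbb R}\to[0;1]$ (cf. \cite[Proposition 1]{MK}), I may assume that there are continuous $u_n\colon X\to[0;1]$ with $g=\min_{n}u_n$ and $h=\max_{n}u_n$. Put $u=u_1$, so that $g\le u\le h$, and set $g_n=\min_{k\le n}u_k$, $h_n=\max_{k\le n}u_k$. These are continuous, $g_n\searrow g$, $h_n\nearrow h$, and the convergence is stable: for each $x$, choosing $k$ with $u_k(x)=g(x)$ yields $g_n(x)=g(x)$ for all $n\ge k$, and similarly for $h$. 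Hence $N_g(x)=\min\{n:g_n(x)=g(x)\}$ and $N_h(x)=\min\{n:h_n(x)=h(x)\}$ are finite for every $x$, and with $N(x)=\max\{N_g(x),N_h(x)\}$ and $A_n=\{x\in X:N(x)<n\}$ we have $x\in A_n$ for all $n>N(x)$ and $x\notin A_n$ for $1\le n\le N(x)$.

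The crucial observation is that each $A_n$ is functionally closed in $X$, with no separation axiom assumed on $X$. Indeed $A_1=\varnothing$, while for $n\ge2$, since $g_{n-1}\ge g$ and $h_{n-1}\le h$ pointwise, $X\setminus A_n=\{x:g_{n-1}(x)>g(x)\}\cup\{x:h_{n-1}(x)<h(x)\}=\bigcup_{k}\{x:g_{n-1}(x)>u_k(x)\}\cup\bigcup_{k}\{x:u_k(x)>h_{n-1}(x)\}$, a countable union of cozero sets, hence cozero; thus $A_n$ is a zero set. Since $Y$ is infinite and completely regular, hence infinite regular, Lemma~\ref{l2} supplies a disjoint sequence $(G_n)_{n=1}^{\infty}$ of infinite open subsets of $Y$. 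For each $n$ I would apply Lemma~\ref{l5} to $X$, $Y$, the infinite open set $G_n$, the continuous functions $g_n-u\le0\le h_n-u$, and the functionally closed set $A_n$, getting a separately continuous $f_n\colon X\times Y\to\mathbb R$ with $\mathrm{supp}\,f_n\subseteq(X\setminus A_n)\times G_n$, with $\min_{y\in G_n}f_n(x,y)=g_n(x)-u(x)$ and $\max_{y\in G_n}f_n(x,y)=h_n(x)-u(x)$ for $x\notin A_n$, and with $g_n(x)-u(x)\le f_n(x,y)\le h_n(x)-u(x)$ whenever $y\in G_n$. Then I define $f(x,y)=u(x)+\sum_{n=1}^{\infty}f_n(x,y)$.

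It remains to verify separate continuity and the extremal sections. For a fixed $(x,y)$ at most one summand is nonzero, because the $G_n$ are pairwise disjoint; for a fixed $x$, every summand with $n>N(x)$ vanishes identically, because $x\in A_n$ and $\mathrm{supp}\,f_n\subseteq(X\setminus A_n)\times G_n$. Therefore $f(x,\cdot)=u(x)+\sum_{n\le N(x)}f_n(x,\cdot)$ is a finite sum of continuous functions, while $f(\cdot,y)$ equals $u$ if $y\notin\bigcup_n G_n$ and $u+f_{n_0}(\cdot,y)$ if $y\in G_{n_0}$; so $f$ is separately continuous. Fixing $x$, on each $G_n$ with $n\le N(x)$ the section $f(x,\cdot)$ attains minimum $g_n(x)$ and maximum $h_n(x)$, whereas $f(x,y)=u(x)$ for $y$ outside $\bigcup_n G_n$ and for $y$ in a $G_n$ with $n>N(x)$; since the $g_n$ decrease, the $h_n$ increase, and $N(x)\ge N_g(x),N_h(x)$, this gives $\min_{y\in Y}f(x,y)=\min\{u(x),g_{N(x)}(x)\}=\min\{u(x),g(x)\}=g(x)$ and $\max_{y\in Y}f(x,y)=\max\{u(x),h_{N(x)}(x)\}=h(x)$, both attained. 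Composing $f$ with the inverse homeomorphism $[0;1]\to\overline{\mathbb R}$ yields the desired function.

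The step I expect to be the main obstacle is precisely the functional closedness of the truncation sets $A_n$: it is this that turns the infinite correction series into a sectionwise finite sum, and hence keeps $f$ separately continuous although $X$ is merely a topological space — the point being to present $X\setminus A_n$ via the cozero sets $\{g_{n-1}>u_k\}$ and $\{u_k>h_{n-1}\}$ rather than invoking an extension theorem on $X$. A secondary point requiring care is that the correction series stays in $[0;1]$ and that the extremal values are genuinely attained; both follow from $g_n(x)\le f(x,y)\le h_n(x)$ on $G_n$ together with $g\le g_n$ and $h_n\le h$.
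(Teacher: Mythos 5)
Your proposal is correct, and it reaches the conclusion by a genuinely shorter route than the paper, even though both arguments share the same skeleton: split $Y$ into a disjoint sequence of infinite open sets via Lemma~\ref{l2}, produce on each $G_n$ a separately continuous correction via Lemma~\ref{l5} whose support in the $X$-direction avoids a functionally closed ``stabilization'' set, and sum. The difference is how the functionally closed sets and the approximating continuous pairs $(g_n,h_n)$ are obtained. The paper first factors the pair through the map $\varphi(x)=(u_n(x))_n$ into the Hilbert cube, so as to assume $X$ metrizable; it then invokes Tong's theorem to get an intermediate continuous $\theta$, takes $F_n=\bigcup_{j,k\le n}\{u_j=g\}\cap\{u_k=h\}$ (closed, hence functionally closed by metrizability), and uses Corollary~\ref{l6} to extend $g|_{F_n}$, $h|_{F_n}$ to global continuous functions squeezed between $g$ and $h$. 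You bypass all three of these steps: $u_1$ serves as the intermediate continuous section, $g_n=\min_{k\le n}u_k$ and $h_n=\max_{k\le n}u_k$ are already globally continuous and agree with $g$, $h$ exactly on your sets $A_{n+1}$, and — this is the one genuinely new ingredient — you verify directly that $A_n$ is a zero set in an arbitrary topological space by writing $X\setminus A_n$ as the countable union of the cozero sets $\{g_{n-1}>u_k\}$ and $\{u_k>h_{n-1}\}$ (the identity $\{g_{n-1}>g\}=\bigcup_k\{g_{n-1}>u_k\}$ holding because the minimum defining $g$ is attained). I checked this identity and the cozero argument; they are sound, as is the final bookkeeping $\min_Y f(x,\cdot)=\min\{u(x),g_{N(x)}(x)\}=g(x)$ with attainment on $G_{N(x)}$. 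What the paper's detour buys is a reusable reduction of any stable pair of Hahn to one on a separable metrizable space, where classical extension theorems are available; what your version buys is a self-contained proof that never leaves the original $X$ and makes transparent exactly which set-theoretic structure (functional closedness of the stabilization sets) drives the construction.
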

\begin{proof} Let $\mathbb I=[0;1]$ and $\mathbb{H}=\mathbb{I^{\mathbb{N}}}$ be the Hilbert cube with the product topology. Therefore, $\mathbb H$ is a metrizable compact.
Consider continuous functions $u_n:X\rightarrow\mathbb{I}$ such that
\begin{equation}\label{equ:(1)}
	g(x)=\min\limits_{n\in \mathbb{N}}u_{n}(x)\ \ \ \text{and}\ \ \  h(x)=\max\limits_{n\in \mathbb{N}}u_{n}(x)\ \ \text{for any }  x\in X
\end{equation}
Then the function $\varphi:X\rightarrow\mathbb{H}$,
$$\varphi(x)=\big(u_{n}(x)\big)_{n=1}^{\infty}\text{ for any } x\in X,$$ 
is continuous as well. Denote 
$$\mathbb{H}_0=\Big\{t=(t_{n})_{n=1}^{\infty}\in \mathbb{H}: \text{there exist} \max\limits_{n\in \mathbb{N}}t_{n} \text{ and }\min\limits_{n\in \mathbb{N}}t_{n}\Big\}.$$
 The space $\mathbb{H}_0$ is a separable  metrizable space as a subspace of  $\mathbb H$. Let 
 $$g_{1}(t)=\min\limits_{n\in \mathbb{N}}t_{n}\ \ \text{ and }\ \  h_{1}(t)=\max\limits_{n\in \mathbb{N}}t_{n}\ \ \ \text{ for any }t=(t_n)_{n=1}^\infty\in\mathbb H_0.
 $$ 
 Obviously, $(g_{1},h_{1})$ is a stable pair of Hahn on $\mathbb{H}_0$, because the coordinate projections  $p_n:\mathbb{H}_0\rightarrow\mathbb{I}$, $p_n(t)=t_n$, are continuous and $g_{1}(t)=\min\limits_{n\in \mathbb{N}}p_n(t)$, $h_{1}(t)=\max\limits_{n\in \mathbb{N}}p_n(t)$ for any $t=(t_n)_{n=1}^\infty\in\mathbb H_0$. Moreover, for any $x\in X$
\begin{align*}
	g(x)&=\min\limits_{n\in\mathbb{N}}u_n(x)=g_1\Big(\big(u_n(x)\big)_{n=1}^{\infty}\Big)=g_1\big(\varphi(x)\big),\\
	h(x)&=\max\limits_{n\in\mathbb{N}}u_n(x)=h_1\Big(\big(u_n(x)\big)_{n=1}^{\infty}\Big)=h_1\big(\varphi(x)\big).
\end{align*}
Therefore, $g=g_1\circ\varphi$ and $h=h_1\circ\varphi$.

It is enough to construct a separately continuous function $f_{1}:\mathbb{H}_0\times Y\rightarrow\mathbb{I}$ such that $g_{1}=\wedge_{f_{1}}$ and $h_{1}=\vee_{f_{1}}$.
Indeed, suppose that we have constructed such a function $f_{1}$. Set  $f(x,y)=f_{1}(\varphi(x),y)$ for any $x\in X$ and $y\in Y$. Then 
 $$f^x(y)=f_1^{\varphi(x)}(y)\ \ \ \text{and}\ \ \ f_y(x)=(f_1)_y(\varphi(x))=\big((f_1)_y\circ\varphi\big)(x)$$
 for any $x\in X$ and $y\in Y$.
Hence, $f^x$ and $f_y$ are continuous.
Thus, ${f:X\times Y\rightarrow\mathbb{I}}$ is separately continuous. Moreover, for any $x\in X$
\begin{align*}
	\wedge_f(x)&=\inf\limits_{y\in Y}f(x,y)=\inf\limits_{y\in Y}f_1\big(\varphi(x),y\big)=\wedge_{f_1}\big(\varphi(x)\big)=g_1\big(\varphi(x)\big)=g(x),\\
	\vee_f(x)&=\sup\limits_{y\in Y}f(x,y)=\sup\limits_{y\in Y}f_1\big(\varphi(x),y\big)=\vee_{f_1}\big(\varphi(x)\big)=h_1\big(\varphi(x)\big)=h(x).
\end{align*}

Therefore, without loss of the generality we may assume that $X$ is a metrizable space and $g,h:X\rightarrow \mathbb I$. By the Tong theorem \cite{T} there exists a continuous function $\theta:X\rightarrow\mathbb{R}$ such that $g(x)\leq\theta(x)\leq h(x)$. We may assume that $\theta(x)=0$ on $X$. Indeed, if $\theta\ne 0$ then we may consider the  functions $\tilde{g}=g-\theta$, $\tilde{h}=h-\theta$ and, so, $\tilde{g}\leq0\leq \tilde{h}$.  If we find a separately continuous function $\tilde{f}:X\times Y\rightarrow\mathbb{R}$ such that $\wedge_{\tilde{f}}=\tilde{g}$ and $\vee_{\tilde{f}}=\tilde{h}$ then the function $f(x,y)=\tilde{f}(x,y)+\theta(x)$, $(x,y)\in X\times Y$, has the required properties.

Thus, let us assume $g(x)\leq0\leq h(x)$ on $X$. 
%Since $(g,h)$ is a stable pair of Hahn, then there exists a sequence of continuous functions $u_n:X\rightarrow\mathbb{R}$ such that for any $x\in X$
%$$
%g(x)=\min\limits_{n\in \mathbb{N}}u_n(x)
%\ \ \ \text{ and }\ \ \
%h(x)=\max\limits_{n\in \mathbb{N}}u_n(x)
%\eqno(1)
%$$
Put 
$$A_n=\big\{x\in X:u_n(x)=g(x)\big\}\text{ and } B_n=\big\{x\in X:u_n(x)=h(x)\big\}.$$ 
Then $g|_{A_n}=u_n|_{A_n}$ and $h|_{B_n}=u_n|_{B_n}$, and so, $g|_{A_n}$ and $h|_{B_n}$ is continuous for any $n\in\mathbb N$.
Let us prove that the sets $A_n$ and $B_n$ are closed in $X$. Obviously, $g-u_n\le 0$ is upper semicontinuous.  Thus,  
$$A_n=\{x\in X:u_n(x)\leq g(x)\}=(g-u_n)^{-1}([0;+\infty))$$ is closed in $X$. Analogously, we show that  $B_n$'s are closed as well.
By (\ref{equ:(1)}) we conclude  $X=\bigcup\limits_{j=1}^{\infty}A_j=\bigcup\limits_{k=1}^{\infty}B_k$. So,
$$
X=\bigcup\limits_{j=1}^{\infty}A_j\cap\bigcup\limits_{k=1}^{\infty}B_k=\bigcup\limits_{j,k=1}^{\infty}(A_j\cap B_k)=\bigcup\limits_{j,k=1}^{\infty}C_{j,k},$$
where $C_{j,k}=A_j\cap B_k$. The functions $g|_{C_{j,k}}$ and $h|_{C_{j,k}}$ are continuous as the restrictions of the continuous functions $g|_{A_j}$ and $h|_{B_k}$ on the set $C_{j,k}$.
Put $F_0=\emptyset$ and $F_n=\bigcup\limits_{j,k=1}^{n}C_{j,k}$ for any $n\in\mathbb N$. Then $F_n$ is closed, $F_{n-1}\subseteq F_n$ for any $n\in\mathbb{N}$ and $\bigcup\limits_{n=1}^{\infty}F_n=\bigcup\limits_{j,k=1}^{\infty}C_{j,k}=X$. 

Consider continuous functions   $\tilde{g}_n=g|_{F_n}$ and $\tilde{h}_n=h|_{F_n}$  for any $n\in\mathbb N$. Note, that $(g,0)$ and $(0,h)$ are pairs of Hahn and $$g(x)=\tilde{g}_n(x)\leq 0\leq\tilde{h}_n(x)=h(x)$$ for any $x\in F_n$. Using Corollary~\ref{l6} twice for a subspace $Y=F_n$, pairs $(g,0)$ or $(0,h)$ and functions  $f_0=\tilde g_{n}$ or $f_0=\tilde h_n$, we construct continuous functions $g_n,h_n:X\rightarrow\mathbb{R}$ such that for any $n\in\mathbb{N}$
\begin{align}
	\label{equ:(2)}
	g(x)\leq g_n(x)\leq 0\leq h_n(x)\leq h(x)\ \ \ \ \ \ \ \ \  &\ \text{ for any }x\in X,\\
	\label{equ:(3)}
	g_n(x)=\tilde{g}_n(x)=g(x),\ \ h_n(x)=\tilde{h}_n(x)=h(x) &\ \text{ for any }x\in F_n
\end{align}

By Lemma ~\ref{l2} there exists a sequence of infinite open disjoint sets $G_n$ in $Y$. Put $E_n=F_n\setminus F_{n-1}$ for any $n\in\mathbb{N}$. Since $X$ is metrizable,  $F_n$ is functionally closed. Using  Lemma~\ref{l5} for the sets $A=F_{n-1}$, $G=G_n$ and the functions  $g=g_n$ and $h=h_n$, we conclude that there is a separately continuous function $f_n:X\times Y\rightarrow\mathbb{R}$ such that $\mathrm{supp}f_n\subseteq(X\setminus F_{n-1})\times G_n$ and
\begin{equation}\label{equ:(4)}
	\min\limits_{y\in G_n}f_n(x,y)=g_n(x),\ \  \max\limits_{y\in G_n}f_n(x,y)=h_n(x)\ \ \text{ for any }x\in X\setminus F_{n-1}
\end{equation}
Since $G_n$'s are disjoint, we obtain the disjointness of the supports $\mathrm{supp}f_n$. Thus,  the function $f:X\times Y\rightarrow\mathbb{R}$,
$$f(x,y)=\sum\limits_{n=1}^{\infty}f_n(x,y), \text{ for any }(x,y)\in X\times Y,$$
is welldefined.

First of all, we prove that $f$ is separately continuous. Fix $x\in X$. Since $X=\bigcup\limits_{n=1}^{\infty}F_n=\bigcup\limits_{n=1}^{\infty}E_n$, there exists $n\in\mathbb{N}$ such that $x\in E_n=F_n\setminus F_{n-1}$. Then  $x\in F_n\subseteq F_{m-1}$ for any $m>n$. Thus, $f_m(x,y)=0$ for any $m>n$ and $y\in Y$. And so,
$$f(x,y)=\sum\limits_{k=1}^{n}f_k(x,y),\ \ \ \text{for any } y\in Y.$$
Since $f_k$'s are separately continuous, we obtain the continuity of $f$ with respect to the second variable.

 Consider $y\in Y$. Since $G_n$'s are disjoint, there exists $n\in\mathbb{N}$ such that $y\notin G_k$ for  any $k\neq n$. But $\mathrm{supp}f_k\subseteq(X\setminus F_{k-1})\times G_k$. So, we have  $f_k(x,y)=0$ for any $k\neq n$ and $x\in X$. Thus,
$$f(x,y)=\sum\limits_{k=1}^{\infty}f_k(x,y)=f_n(x,y)\ \ \ \text{for any } x\in X.$$
 Hence, $f$ is continuous with respect to the first variable.

Finally, let us prove (\ref{equ:(0)}).
Fix $x\in X$.  As previously, we find $n\in\mathbb{N}$ with $x\in E_n$ and then
$$f(x,y)=\sum\limits_{k=1}^{n}f_k(x,y)\ \ \ \text{for any } y\in Y$$
 Consider $y\in Y$ and find $m\leq n$ such that $y\notin G_k$ for any $k\neq m$ and $k\leq n$. Then we have that
$f(x,y)=f_m(x,y)$.
Since $F_{m-1}\subseteq F_{n-1}$ and $x\notin F_{n-1}$, we obtain $x\in X\setminus F_{m-1}$. Thus, by (\ref{equ:(2)}) and (\ref{equ:(4)}) we have that
\begin{align*}
	g(x)&\leq g_m(x)=\min\limits_{z\in Y} f_m(x,z)\leq f_m(x,y)\\
	&=f(x,y)\leq\max\limits_{z\in Y} f_m(x,z)= h_m(x)\leq h(x)
\end{align*}
Therefore, 
\begin{equation}\label{equ:(5)}
	g(x)\leq f(x,y)\leq h(x)\ \ \  \text{for any } y\in Y.
\end{equation}

Using (\ref{equ:(4)}) we find points $y_1, y_2 \in G_n$ such that
$f_n(x,y_1)=g_n(x)$ and $f_n(x,y_2)=h_n(x)$.
 But $f_k(x,y_1)=0$ and $f_k(x,y_2)=0$ for $k\neq n$. Moreover, since $x\in F_n$, we conclude that $g_n(x)=g(x)$ and  $h_n(x)=h(x)$ by (\ref{equ:(3)}). Therefore,
\begin{align*}
	g(x)&=g_n(x)=f_n(x,y_1)=f(x,y_1)\ \ \ \text{and}\\ 
	h(x)&=h_n(x)=f_n(x,y_2)=f(x,y_2).
\end{align*}
Thus, the previous equalities and (\ref{equ:(5)}) give (\ref{equ:(0)}).
\end{proof}

\subsection*{Acknowledgements}
The research was supported by the University of Silesia Mathematics Department
(Iterative Functional Equations and Real Analysis program).
%%%%%%%%%
%%%%%%%%%
%%%%%%%%%
%%%%%%%%%
%%%%%%%%%

%%%%%%%%%%% To ease editing, use normal size for the references:

\normalsize

\end{document}